\newtheorem{theorem}{Theorem}[section]
\newtheorem{lemma}[theorem]{Lemma}
\newtheorem{proposition}[theorem]{Proposition}
\newtheorem{remark}[theorem]{Remark}
\numberwithin{equation}{section}
\newcommand{\ZZ}{\mathbb{Z}}
\title{Spectral statistics of  random Schr\"{o}dinger operator with growing potential}
\author{Anish Mallick\footnote{E-mail:\texttt{anish.mallick@icts.res.in}, Institute: ICTS-TIFR Bangaluru, India.} \& Dhriti Ranjan Dolai\footnote{E-mail:\texttt{dhriti\_vs@isibang.ac.in}, Institute: ISI Bangalore, India.}}
\date{\today}
\begin{document}
\setlength{\belowdisplayskip}{2pt} \setlength{\belowdisplayshortskip}{2pt}
\setlength{\abovedisplayskip}{2pt} \setlength{\abovedisplayshortskip}{2pt}
\maketitle
\noindent {\bf Abstract.} In this work we investigate the spectral statistics of random Schr\"{o}dinger operators 
$H^\omega=-\Delta+\sum_{n\in\mathbb{Z}^d}(1+|n|^\alpha)q_n(\omega)|\delta_n\rangle\langle\delta_n|$, $\alpha>0$
acting on $\ell^2(\mathbb{Z}^d)$ where  $\{q_n\}_{n\in\mathbb{Z}^d}$ are i.i.d random variables distributed uniformly on $[0,1]$.\\\\
{\bf AMS 2000 MSC}: 35J10, 81Q10, 35P20.\\
{\bf Keywords:} Anderson model, Poisson statistics, Growing potential.

\section{Introduction}
\noindent We consider the random Scr\"{o}dinger operators with unbounded potentials given by
\begin{equation}
 \label{Model}
(H^\omega u)(n)= \sum_{\| n-m \|_1=1}(u(n)-u(m))+(1+\lVert n\rVert^\alpha_2)q_n(\omega)u(n)\qquad\forall n\in\ZZ^d,
\end{equation}
for $u\in\ell^2(\mathbb{Z}^d)$ with finite support. 
We will use the notation $\lVert\cdot\rVert_p$ to denote the standard $p$-norm and $\{q_n\}_n$ are independent identically distributed random variables distributed uniformly on $[0,1]$. 
We will view $q_n(\cdot)$ as a random field on the probability space $(\Omega,\mathbb{P},\mathcal{B}(\Omega))$.
Denote
\begin{align}
 \label{lap}
(H_0 u)(n):=((2d-\Delta)u)(n)=\sum_{\lVert n-m \rVert_1=1} (u(n)-u(m))\qquad\forall u\in\ell^2(\ZZ^d),n\in\ZZ^d,
\end{align}
and observe that  $0\leq H_0\leq 4d$.
Henceforth we will use $V_n(\omega)=b_nq_n(\omega)$ where $b_n=(1+\lVert n\rVert_2^\alpha)$ and following Dirac notation we have
\begin{equation*}
 V(\omega)=\sum_{n\in\mathbb{Z}^d}V_n(\omega)|\delta_n\rangle\langle\delta_n|,
\end{equation*}
so one can express $H^\omega$ as
\begin{equation}
\label{model}
 H^\omega=2d-\Delta+V(\omega).
\end{equation}
Here we will concentrate on the case $0<\alpha<d$. This is because the spectrum of $H^\omega$ is almost surely discrete for $\alpha>d$.

\noindent The spectral theory for the operator \eqref{Model} were studied by Gordon-Molchanov-Tsagani \cite{GMT} for one-dimension and Gordon-Jak\v{s}i\'{c}-Mochanov-Simon \cite{GJMS} for higher dimension.
We require some results from \cite{GJMS} to formulate our problem. The form $\mathscr{D}$ associated to $H_0$ is defined by 
\begin{equation}
 \label{form}
\mathscr{D}(\phi)=\sum_{ \|n-m\|_1=1}|\phi(n)-\phi(m)|^2,~~\phi\in\ell^2(\mathbb{Z}^d).
\end{equation}
Denote
\begin{equation}
\label{def2}
 a_k=\inf_{A_k}\inf_{\substack{\|\phi\|=1\\ supp(\phi)\subseteq A_k}} \mathscr{D}(\phi),
\end{equation}
where $A_k$ are connected (there is a path between every two points) subsets of $\mathbb{Z}^d$ with $\#A_k=k$.
Then $\{a_k\}_{k\ge 0}$ (set $a_0=\infty$) is a strictly decreasing sequence of positive numbers.
We denote $H^\omega_{\Lambda_L}$ to be the restriction of $H^\omega$ on the subspace $\ell^2(\Lambda_L)$ where $\Lambda_L$ is the cube $\{(n_1,n_2,\cdots,n_d)\in \mathbb{Z}^d:|n_i|\leq L\}$, and set
\begin{equation*}
 N^\omega_L(E):=\#\big\{j: E_j\leq E,~E_j\in\sigma(H^\omega_{\Lambda_L})\big\},\qquad E\in\mathbb{R}.
\end{equation*}
Following results are recalled from Gordon-Jak\v{s}i\'{c}-Molchanov-Simon \cite{GJMS}.
\begin{theorem}[{\cite[Theorem 1.2]{GJMS}}]
\label{refth1}
 If $\frac{d}{k}\ge \alpha>\frac{d}{k+1}$ for positive integer $k$, for a.e $\omega$ 
\begin{enumerate}
\item[(i)] $\sigma(H^\omega)=\sigma_{pp}(H^\omega)$ and eigenfunctions of $H^\omega$ decay at least exponentially,
\item[(ii)] $\sigma_{ess}(H^\omega)=[a_k,\infty),$
\item[(iii)] $\#\sigma_{disc}(H^\omega)<\infty$. 
\end{enumerate}
\end{theorem}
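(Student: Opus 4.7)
Proof plan. The argument is driven by a cluster-size Borel--Cantelli analysis: the exponent $\alpha$ determines the largest connected region on which the random potential $V_n=(1+|n|^\alpha)q_n$ can be uniformly small. Since $q_n\sim\mathrm{Unif}[0,1]$,
\[
\mathbb{P}(V_n<\epsilon)=\min\!\left\{1,\frac{\epsilon}{1+|n|^\alpha}\right\},
\]
so by independence $\mathbb{P}(\max_{n\in A}V_n<\epsilon)\le\epsilon^{|A|}\prod_{n\in A}(1+|n|^\alpha)^{-1}$ for every connected $A\subset\mathbb{Z}^d$. The number of connected subsets of cardinality $j$ meeting the annulus $\{|n|\asymp R\}$ is $O(R^d)$, so the expected number of size-$j$ small-potential clusters there is $O(\epsilon^j R^{d-\alpha j})$. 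Summing in $R$, the hypothesis $d/(k+1)<\alpha\le d/k$ yields, by the first Borel--Cantelli lemma, that almost surely only finitely many connected clusters of size $k+1$ lie in $\{V<\epsilon\}$, and by the second (applied to a sparsified family of disjoint translates) that almost surely infinitely many pairwise disjoint size-$k$ clusters do.

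For (ii), the inclusion $[a_k,\infty)\subseteq\sigma_{ess}(H^\omega)$ is produced by Weyl sequences: given $E\ge a_k$, fix a connected $A$ with $|A|=k$ and $\Lambda(A)$ close to $a_k$, a near-minimizer $\phi_0\in C(A)$, and infinitely many disjoint small-potential translates $A+y_j$ from the Borel--Cantelli output. A variant of the count above shows that one may further prescribe $V_n\simeq t_n$ on these translates for any target vector $(t_n)$, so that $\langle\phi_0(\cdot-y_j),H^\omega\phi_0(\cdot-y_j)\rangle$ can be tuned to any value in a dense subset of $[a_k,\infty)$; closedness of $\sigma_{ess}$ then gives the whole half-line. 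The reverse inclusion $\sigma_{ess}\subseteq[a_k,\infty)$ is the technical heart: given $E<a_k$, pick $\epsilon$ with $E+\epsilon<a_k$ and use the first Borel--Cantelli statement to find a finite set $F$ outside of which every connected component of the light set $\{V<a_k-E-\epsilon\}$ has size $\le k$. For $\psi$ supported in $F^c$, decompose $F^c$ into these light components $\{C_\ell\}$ and their heavy complement $S$. On each $C_\ell$ the inequality $\Lambda(C_\ell)\ge a_{|C_\ell|}\ge a_k$ controls the kinetic form, while on $S$ the potential itself dominates by $a_k-E-\epsilon$. An IMS-type localization combining these two contributions yields $\langle\psi,H^\omega\psi\rangle\ge(E+\epsilon/2)\|\psi\|^2$, and the min-max principle then forces $\dim\mathrm{Ran}\,E_{H^\omega}(-\infty,E)<\infty$.

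Claim (iii) is then immediate from (ii): the discrete spectrum lies in $[0,a_k)$ and we have just bounded the rank of $E_{H^\omega}(-\infty,E)$ uniformly in $E<a_k-\delta$. For (i), pure-pointedness with at least exponentially decaying eigenfunctions follows from the fact that the potential grows like $|n|^\alpha$ off a random set of low density. I would apply the Simon--Wolff criterion coordinate-by-coordinate in the variables $\{q_n\}$ and combine it with a Combes--Thomas resolvent estimate: outside the sparse low-potential clusters there is a large multiplicative barrier, which forces exponential decay of the finite-volume Green's function between well-separated points and, by Shnol'-type reasoning, of the generalized eigenfunctions.

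The main obstacle I foresee is the IMS step in the second paragraph: the partition of unity must be subordinated to the random geometry of the light components, and the error $\sum_\ell\langle\psi,|\nabla\chi_\ell|^2\psi\rangle$ must be absorbed into the heavy-potential reserve via a careful buffer around each light cluster. This relies on a slight strengthening of the Borel--Cantelli input so as to also control the immediate neighbors of the light clusters, allowing the partition to be smoothed without creating new light-to-heavy transitions. Once this geometric decomposition is in place, the Weyl-sequence construction, the Simon--Wolff localization, and the finiteness of the discrete spectrum are all essentially standard.
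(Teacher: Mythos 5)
First, a structural observation: the paper does not prove this statement at all. Theorem \ref{refth1} is recalled verbatim from Gordon--Jak\v{s}i\'{c}--Molchanov--Simon \cite{GJMS} (their Theorem~1.2); no proof is given here, and none is attempted. So there is no ``paper's own proof'' to compare your argument against.

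On the merits of your sketch: the cluster Borel--Cantelli analysis, the Weyl-sequence construction for $[a_k,\infty)\subseteq\sigma_{ess}$, and the IMS decomposition for the reverse inclusion are all the natural ingredients, and (up to the technical care you already flag about buffering the light components) your outline of (ii) is plausible and close in spirit to \cite{GJMS}. Your sketch of (i) via a Combes--Thomas-type barrier plus Simon--Wolff is also reasonable for a growing potential; the paper itself builds exactly that kind of fractional-moment machinery in Section~4 for its own purposes.

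The genuine gap is in (iii), and it is not a small one. You write that (iii) is ``immediate from (ii)'' because you have ``bounded the rank of $E_{H^\omega}(-\infty,E)$ uniformly in $E<a_k-\delta$.'' But that bound is uniform only on compact subintervals $[0,a_k-\delta]$; the finite exceptional set $F=F(\delta)$ and the ensuing rank bound both depend on $\delta$, and nothing in the IMS estimate prevents the rank from blowing up as $\delta\downarrow 0$. In other words, your argument rules out accumulation of eigenvalues at any $E^*<a_k$ (which is already a consequence of (ii)), but it does not rule out an infinite sequence of discrete eigenvalues accumulating at $a_k$ from below --- precisely the scenario that (iii) excludes. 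To close this you need a rank bound valid all the way up to $a_k$: for instance a Birman--Schwinger or CLR-type counting estimate showing that only the finitely many small-potential clusters (those surviving Borel--Cantelli, with size $\leq k$) can contribute bound states below $a_k$, with a bound on the count that is independent of the spectral parameter. As written, ``(iii) is immediate'' is an overclaim, and this is where the most work remains.
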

\noindent and
\begin{theorem}[{\cite[Theorem 1.4]{GJMS}}]
\label{refth2}~
\begin{enumerate}
 \item[(i)] If $\frac{d}{k}> \alpha>\frac{d}{k+1}$ and $E\in(a_j,a_{j-1})$ for $1\leq j\leq k$, then
$$\lim_{L\to\infty} \frac{N_L^\omega(E)}{L^{d-j\alpha}}=N_j(E)$$
exist for a.e $\omega$ and is a non-random function.
\item[(ii)] If $\alpha=\frac{d}{k}$ and $E\in(a_j,a_{j-1})$,~$1\leq j<k$, the above is valid. If $E\in (a_k, a_{k-1})$ then
$$\lim_{L\to\infty} \frac{N_L^\omega(E)}{ln L}=N_k(E)$$
exists for a.e $\omega$ and is a non-random function.
\end{enumerate}
In both cases, $N_j$ is a continuous function on $(a_j, a_{j-1})$, and
$$N_j(E)\sim D_j (E-a_j)^j~~as~~E\to a_j.$$
The constants $D_j$ are of combinatorial nature.
\end{theorem}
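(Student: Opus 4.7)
The plan is to reduce the counting of eigenvalues of $H^\omega_{\Lambda_L}$ below energy $E\in(a_j,a_{j-1})$ to a combinatorial count of connected $j$-subsets of $\Lambda_L$ whose potential values are sufficiently small, and then to evaluate that count asymptotically.

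First I would use the variational characterization (\ref{def1})--(\ref{def2}): any normalized vector supported on a connected set of cardinality at most $j-1$ has Dirichlet form value at least $a_{j-1}>E$, hence by min--max every eigenfunction of $H^\omega_{\Lambda_L}$ with eigenvalue below $E$ must have \emph{essential} connected support of cardinality at least $j$, and on that support the potentials $V_n(\omega)=b_n q_n(\omega)$ must be comparable to $E$. Conversely, whenever a connected $j$-subset $A\subset\Lambda_L$ has all its $V_n$ small enough, the minimizer of $\mathscr{D}$ on $C(A)$ provides a trial state whose Rayleigh quotient is at most $\Lambda(A)+\max_{n\in A}V_n(\omega)\leq E$. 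I would formalize this two-sided reduction by decoupling $H^\omega_{\Lambda_L}$ along the random partition induced by the low-potential sites $\{n:V_n(\omega)<E+\varepsilon\}$, using a Schur-complement/resolvent expansion together with the exponential decay of eigenfunctions from Theorem \ref{refth1}(i) to control tunneling between clusters.

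The second step is the count itself. For a fixed $j$-subset $\{n_1,\dots,n_j\}$ the probability that all $V_{n_i}$ lie below a threshold $t$ is $\prod_i \min(1,t/b_{n_i}) \sim t^j\prod_i |n_i|^{-\alpha}$ for large $|n_i|$. Summing over positions and over the finitely many bounded-diameter ``shapes'' of connected $j$-clusters in $\Lambda_L$ yields an expected count proportional to $\sum_{n\in\Lambda_L}|n|^{-j\alpha}$, which is of order $L^{d-j\alpha}$ in the regime of part (i) and exactly of order $\log L$ in the critical case $j=k=d/\alpha$ of part (ii); the hypothesis $\alpha>d/(k+1)$ rules out larger clusters via Borel--Cantelli. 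Tracking the dependence of the Rayleigh quotient on the $V_{n_i}$ produces a continuous limit function $N_j(E)$, and the asymptotic $N_j(E)\sim C_j(E-a_j)^j$ near $a_j$ is immediate because each of the $j$ potentials must land in a window of width $\sim E-a_j$, giving a $j$-fold product. To pass from expectation to the almost-sure limit I would bound $\mathrm{Var}(N_L^\omega(E))$ by its mean (the cluster-indicators are independent except for overlap), apply Chebyshev and Borel--Cantelli along the dyadic subsequence $L_m=2^m$, and then interpolate using monotonicity of $L\mapsto N_L^\omega(E)$.

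The main obstacle I expect is the first step. Because the model is not translation invariant and the potential grows polynomially, the resolvent decoupling must yield errors that are $o(L^{d-j\alpha})$ uniformly in $L$; this requires a careful inversion of the ``high-potential'' block whose norm grows like $L^\alpha$, balanced against the combinatorial count. It is also here that the precise geometry of the minimizing Dirichlet-form configurations on $j$-subsets enters and pins down the constant $C_j$ of ``combinatorial nature'' mentioned in the statement.
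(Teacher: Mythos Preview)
This theorem is not proved in the paper at all: it is stated verbatim as \cite[Theorem 1.4]{GJMS} and only \emph{recalled} as background in order to set up the main result (Theorem~\ref{main}). The paper uses parts (i)--(ii) only as input --- to know the correct scaling $L^{d-\alpha}$ in the definition \eqref{defpoint} of $\xi^\omega_{L,E}$ and to obtain the weak convergence \eqref{weakcon} of $\nu_L$ to $\nu$ --- but never supplies any argument for the statement itself. So there is nothing in the present paper to compare your proposal against.

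That said, your outline is a reasonable sketch of the strategy in \cite{GJMS}: the reduction to connected $j$-clusters of low-potential sites via the Dirichlet-form minimization \eqref{def1}--\eqref{def2}, followed by the probabilistic count $\mathbb{P}(V_{n_i}\le t\ \forall i)\asymp t^j\prod_i|n_i|^{-\alpha}$ summed over shapes, is exactly the mechanism that produces the $L^{d-j\alpha}$ (or $\ln L$) growth and the $(E-a_j)^j$ edge asymptotic. The delicate part you flagged --- controlling tunneling between clusters so that the decoupling error is $o(L^{d-j\alpha})$ --- is indeed where most of the work lies, and in \cite{GJMS} it is handled by a direct min--max comparison rather than via resolvent/Schur-complement expansions; the exponential eigenfunction decay from Theorem~\ref{refth1}(i) is a \emph{consequence} of that analysis and is not available as an a priori input in the way your plan uses it. If you want to fill in a self-contained proof you should look at \cite{GJMS} directly rather than at the present paper.
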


\noindent Here we will study the eigenvalue statistics for the operator $H^\omega$ in the essential spectrum.
By Theorem \ref{refth1} (i), the essential spectrum of $H^\omega$ is pure point, so one can expect that the statistics is Poisson point process, which we will show here.
Optimal Wegner estimate is absent for general intervals except for any interval contained in $(a_1,\infty)$. 
So in the interval $(a_1,\infty)$ we will show that the local eigenvalue statistics as developed by Minami \cite{NM} is simple Poisson point process.
On the other hand for a point process defined on the intervals $\{(a_j,a_{j-1})\}_{j=2}^k$, we will focus on the unfolded eigenvalue statistics as developed by Klopp\cite{Klopp}.
Based on \cite[Lemma 5.2]{GJMS}, we are able to show the absolute continuity of $N_j$ in the region $(a_j, a_{j-1})$ for $2\leq j\leq k$.
Above theorem also gives the asymptotic expression of $N_j$ as $E\rightarrow a_j$, which implies $N_j(a_{j-1})-N_j(a_j)>0$.
Hence the distribution defined by $x\mapsto \frac{N_j(x)-N_j(a_{j})}{N_j(a_{j-1})-N_j(a_j)}$ is well-defined and non-trivial for $x\in(a_j,a_{j-1})$.
\\\\
But on the interval $(a_1,\infty)$, we have usual Wegner estimate which provides the absolute continuity of $N_1$. 
We also have $N_1^\prime(x)>0,~\forall~x>a_1$, hence the local eigenvalue statistics as developed by Minami \cite{NM} can also be computed.
\\\\
Before we state  our result for the unfolded eigenvalue statistics, we require few notations.
For $2\leq j\leq k$, set $N_j(a_j, a_{j-1}):= N_j({a_{j-1}})-N_j(a_j)$ and define $\tilde{N}_j(x)=\frac{N_j(x)-N_j(a_{j})}{N_j(a_j, a_{j-1})}$. Setting
$\beta_{j, L}=N_j(a_j, a_{j-1}) L^{d-j\alpha}$, 
define the point process $\{\xi^{\omega,t}_{L,j}(\cdot)\}_L$ by
\begin{equation}
\label{unfold-pt}
 \xi^{\omega,t}_{L,j}(\cdot)=\sum_{x\in\sigma\big(H^\omega_{\Lambda_L}\big)\cap(a_j, a_{j-1})}\delta_{\beta_{j, L}[\tilde{N}_j(x)-t]}(\cdot),~~
 2\leq j\leq k.
\end{equation} 
for $t\in(0,1)$. We will view $t$ as a random variable which is uniformly distributed on $(0,1)$, so $\xi_{L,j}^{\cdot,\cdot}$ is a 
random measure define on the probability space $(\Omega\times[0,1],\mathcal{B}(\Omega)\otimes\mathcal{B}_{[0,1]}, \mathbb{P}\times Leb)$, where $Leb$ is the Lebesgue measure on $[0,1]$.
With these definitions in place we state our main results.
\begin{theorem}
 \label{thm1}
For the operator $H^\omega$ defined as \eqref{Model}, the point process $\xi^{\omega,t}_{L,j}(\cdot)$, 
defined on the probability space $(\Omega\times[0,1],\mathcal{B}(\Omega)\otimes\mathcal{B}_{[0,1]},\mathbb{P}\times Leb)$, 
converges weakly to Poisson point process with intensity as the Lebesgue measure on $\mathbb{R}$, for $2\leq j\leq k$.
\end{theorem}

\noindent It should be noted that above theorem can be easily extended to $j=1$ case with slight modification. 
But we study the local eigenvalue statistics as developed by Minami \cite{NM}.
On the interval $(a_1, \infty)$ we can study the local eigenvalue statistics associated with $H^\omega_{\Lambda_L}$. 
Define the point process
\begin{equation}
 \label{defpoint}
\xi^\omega_{L,E}(\cdot)=\sum_{x\in\sigma(H^\omega_{\Lambda_L})}\delta_{L^{d-\alpha}(x-E)}(\cdot), 
\end{equation}
for $E\in (a_1,\infty)$.
From Theorem \ref{refth2} it is clear that the average spacing between eigenvalues of $H^\omega_{\Lambda_L}$
in the region $(a_1,\infty)$ is of order $\frac{1}{L^{d-\alpha}}$.
Our goal is to show that the weak limit of the sequence of point processes $\{\xi^\omega_{L,E}\}_L$
is Poisson point process with intensity measure $N^\prime_1(E)dx$. 
But we can provide an explicit expression of $N_1(x)$ as done in the following:
\begin{theorem}\label{thmReg1}
The function $N_1$ is given by
$$N_1(x)=C_{d,\alpha}(x-2d)\qquad x>2d, $$
where 
$$C_{d,\alpha}=\lim_{L\rightarrow\infty}\frac{1}{L^{d-\alpha}}\sum_{n\in\Lambda_L}\frac{1}{b_n}.$$
\end{theorem}

\noindent Note that the theorem below is slightly more stronger than Theorem \ref{thm1}. 
Here the point process converges for each $E$, but in the previous theorem we viewed $E$ as a random variable.

\noindent We can compute the limit of $\frac{1}{L^{d-\alpha}}N_L^\omega(E)$ by approximating it with a nice family of functions and show uniform convergence over compact sets, 
hence the local density of states converges compact uniformly.
So for the statistics of $\{\xi^\omega_{L,E}\}_L$, we have
\begin{theorem}
 \label{main}
For $0<\alpha< d$, let $H^\omega$ be defined by \eqref{Model}  and set  $\xi^\omega_{L,E}$ to be the point measure \eqref{defpoint}. 
Then for $E\in (8d^2, \infty)$, the sequence of point process $\{\xi^\omega_{L,E}\}_L$ converges weakly to the Poisson point process with intensity measure $C_{d,\alpha}dx$
(Lebesgue measure). So for any bounded Borel set $B\in\mathbb{B_{\mathbb{R}}}$
$$\lim_{L\to\infty}\mathbb{P}\big(\omega:\xi^\omega_{L,E}(B)=n \big)=e^{-C_{d,\alpha}|B|}\frac{\big(C_{d,\alpha}|B|\big)^n}{n!}~~for~~n\in\mathbb{N}\cup\{0\}.$$
\end{theorem}
\noindent The study of eigenvalue statistics were done by Molchanov \cite{Mol} in one-dimension and by Minami \cite{NM} for Anderson model at high disorder with stationary potentials.
 In \cite{MD} Dolai-Krishna obtained Poisson statistics for discrete Anderson model with uniformly H\"{o}lder continuous single site potential.
Eigenfunction statistics were studied by Nakano \cite{Na} for continuous model and Killip-Nakano \cite{KN} for lattice case with bounded density for single site distribution.
Dolai-Mallick \cite{AD} studied the eigenfunction statistics for H\"{o}lder continuous single site distribution on lattice.
\\\\
But the energy level statistics as developed by Molchanov and Minami usually require the presence of absolute continuity for density of states and Wegner estimate with correct scaling.
Minami \cite{NM1} (see also Berry-Tabor \cite{BT}) conjectured a different type of statistics using the unfolded eigenvalues for discrete Anderson model in the region of localization.
Klopp\cite{Klopp} proved the Poisson limit theorem for unfolded eigenvalues of random operators in localized regime. 
The proof does not needs strong regularity assumption on the Integrated density of states.
The work in \cite{Klopp} have some connection with  Germinet-Klopp \cite{GK1}, which studied various statistics related to the eigenvalues and eigenfunctions of random Hamiltonians $H^\omega$ in the localized regime.
One of the typical result in \cite{GK1} is convergence of empirical level spacing distribution, i.e
$$DSL(x,\omega,\Lambda) \xrightarrow[|\Lambda|\to\infty]{Uniformly} g(x),$$
where 
$$DSL(x,\omega,\Lambda)=\frac{\#\big\{j:E_{j+1}(\omega,\Lambda)-E_j(\omega,\Lambda) \ge x\big\}}{|\Lambda|}~~\&~~ g(x)=\int_{\Sigma}e^{-\nu(E)x}\nu(E)dE.$$ 
In the above, $\{E_j(\omega,\Lambda)\}$ are the eigenvalues of $H^\omega_{\Lambda}$, $\nu$ is the density of states and $\Sigma$ is the almost sure spectrum of $H^\omega$.
\\\\
For decaying potentials (i.e $\alpha<0$) the spectral statistics were studied on several occasions.
Killip-Stoiciu \cite{KS} studied the CMV matrices whose matrix elements decay like $n^{-\alpha}$. They showed that,
for (i) $\alpha>1/2$ statistics is the clock, (ii) $\alpha=1/2$ limiting process is
circular $\beta-$ensemble, (iii) $0<\alpha<1/2$ the statistics is Poisson.  Analogues of Killip-Stoiciu\cite{KS} was done by Kotani-Nakano\cite{KotNak} for
the one-dimensional Schr\"{o}dinger operator with decaying potentials in the continuum model and obtained the same statistics for $\alpha>1/2$ and $\alpha=1/2$.
Krichevski-Valk\'{o}-Vir\'{a}g \cite{VV} studied the one-dimensional discrete Schr\"{o}dinger operator with the random potential decaying like $n^{-1/2}$ and obtained the Sine$\beta$-process. 
In \cite{DM} Dolai-Krishna consider the Anderson Model with decaying Random Potentials and shown that statistics inside $[-2d, 2d]$ for dimension $d\ge 3$ is independent of the randomness and agrees with that of the free part $\Delta$.

\section{\bf Preliminaries}
For any $\Lambda \subset\mathbb{Z}^d$ we consider the canonical orthogonal projection $\chi_\Lambda$ onto $\ell^2(\Lambda)$ and define the matrices
\begin{equation}
\label{defgr}
 H^{\omega}_\Lambda=\big(\langle\delta_n, H^{\omega}\delta_m\rangle\big)_{n,m\in \Lambda},~
G^\Lambda(z;n,m)=\langle\delta_{n},(H_\Lambda^{\omega}-z)^{-1}\delta_{m}\rangle,~G^\Lambda(z)=(H_\Lambda^{\omega}-z)^{-1}.
\end{equation}
$$
G(z)=(H^{\omega}-z)^{-1},~~G(z;n,m)=\langle\delta_{n},(H^{\omega}-z)^{-1}\delta_{m}\rangle, ~~z\in\mathbb{C}^{+}.
$$
Note that $H^{\omega}_\Lambda$ is the matrix 
$$\chi_\Lambda H^{\omega}\chi_\Lambda~:~\ell^2(\Lambda)\longrightarrow\ell^2(\Lambda),~ a.e ~\omega.$$
\noindent Define
\begin{equation}
\label{finite}
 \nu_L(\cdot)=\frac{1}{L^{d-\alpha}}\displaystyle\sum_{n\in\Lambda_L}\mathbb{E}^\omega\big(\langle \delta_n, E_{H^\omega_{\Lambda_L}}(\cdot)\delta_n\rangle\big),
\end{equation}
 Observe 
\begin{equation}
\label{fact}
\displaystyle\sum_{n\in\Lambda_L}b_n^{-1} =\displaystyle\sum_{n\in\Lambda_L}(1+\lVert n\rVert_2^\alpha)^{-1}=\Theta(L^{d-\alpha}).
\end{equation}
For any bounded interval $I=[a,b]$ we have
$$\nu_L(I)=\frac{1}{L^{d-\alpha}}\mathbb{E}^\omega[N^\omega_L(b)-N^\omega_L(a)].$$

\noindent In this section we calculate the Minami and Wegner estimate and show the positivity of the measure $\nu$ given by
\begin{equation}
\label{defgama}
N_1(x)=\nu(a_1,x),~for~x>a_1,
\end{equation}
here $N_1$ is defined as in (i) of Theorem \ref{refth2}.\\
We start with spectral averaging estimate for the model (\ref{Model}).
\begin{proposition}
\label{pro1}
Let $I=[a,b]\subset\mathbb{R}$ be a bounded interval and $\Lambda\subseteq \mathbb{Z}^d$ 
\begin{center}
$\mathbb{E}^{\omega}(\langle\delta_n,E_{H^\omega_\Lambda}(I)\delta_n\rangle )\leq \pi(1+\lVert n\rVert_2^\alpha)^{-1}|I|,\qquad n\in \Lambda$.
\end{center}
\end{proposition}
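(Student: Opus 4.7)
\textbf{Proof plan for Proposition \ref{pro1}.} The plan is the standard rank-one spectral averaging argument (Combes--Thomas / Simon--Wolff style), adapted to the fact that the single-site coupling $V_n = b_n q_n$ has density $b_n^{-1}\mathbf{1}_{[0,b_n]}$ rather than $\mathbf{1}_{[0,1]}$. The factor $b_n^{-1}=(1+|n|^\alpha)^{-1}$ in the estimate will come precisely from the Jacobian of the change of variable $u=b_n q_n$.

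First, I would condition on all randomness other than $q_n$: write $H^\omega_\Lambda = A(\omega) + b_n q_n(\omega)\,|\delta_n\rangle\langle\delta_n|$, where $A(\omega)$ is independent of $q_n$. The key tool is the Krein/rank-one resolvent identity
\[
\langle\delta_n,(H^\omega_\Lambda-z)^{-1}\delta_n\rangle=\frac{F(z)}{1+b_n q_n F(z)},\qquad F(z):=\langle\delta_n,(A(\omega)-z)^{-1}\delta_n\rangle,
\]
valid for $z\in\mathbb{C}^+$. Combined with Stone's formula
\[
\langle\delta_n,E_{H^\omega_\Lambda}(I)\delta_n\rangle=\lim_{\epsilon\downarrow 0}\frac{1}{\pi}\int_I \mathrm{Im}\,\langle\delta_n,(H^\omega_\Lambda-E-i\epsilon)^{-1}\delta_n\rangle\,dE
\]
(after reducing to intervals whose endpoints are a.s.\ not eigenvalues, by approximation), this expresses the spectral projection matrix element as an integral of a Herglotz function of $q_n$.

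Next, I would integrate over $q_n$ and change variables $u = b_n q_n$:
\[
\int_0^1 \mathrm{Im}\,\frac{F(E+i\epsilon)}{1+b_n q_n F(E+i\epsilon)}\,dq_n = \frac{1}{b_n}\int_0^{b_n}\mathrm{Im}\,\frac{F(E+i\epsilon)}{1+uF(E+i\epsilon)}\,du.
\]
Because $\mathrm{Im}\,F>0$ in $\mathbb{C}^+$, the integrand is a positive Herglotz-in-$u$ function, so extending the $u$-integral to all of $\mathbb{R}$ only enlarges the right-hand side. Boole's equality then gives
\[
\int_{\mathbb{R}}\mathrm{Im}\,\frac{F(E+i\epsilon)}{1+uF(E+i\epsilon)}\,du=\pi,
\]
yielding $\mathbb{E}^{q_n}[\mathrm{Im}\,\langle\delta_n,(H^\omega_\Lambda-E-i\epsilon)^{-1}\delta_n\rangle]\leq \pi/b_n$ uniformly in $\epsilon>0$ and $E$.

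Finally, I would apply Fubini to interchange $\int_I dE$ with the conditional expectation, pass to the $\epsilon\downarrow 0$ limit (by dominated convergence, since $I$ is bounded and the integrand is bounded by $\pi/b_n$), and take expectation over the remaining variables $\{q_m\}_{m\ne n}$. This produces the cleaner bound $|I|/b_n$, which is already sharper than the asserted $\pi|I|/b_n$; the factor of $\pi$ in the statement is harmless slack. I do not anticipate any real obstacle: the argument is essentially the same as the one used for the Anderson model with uniform single-site distribution, the only twist being the rescaling $V_n = b_n q_n$, and the care needed is just in the approximation justifying Stone's formula on the closed interval $I$ and the dominated-convergence step as $\epsilon\downarrow 0$.
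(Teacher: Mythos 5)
Your proof is correct and takes essentially the same route as the paper: both condition on $q_n$, use the rank-one resolvent identity to write $\langle\delta_n,(H^\omega_\Lambda-z)^{-1}\delta_n\rangle$ as a Herglotz function of $q_n$, average over $q_n$ via the Cauchy/Poisson-kernel normalization (your ``Boole's equality'', the paper's explicit $\int B/((x+A)^2+B^2)\,dx\le\pi$ computation) to obtain the bound $\pi/b_n$ uniformly in $\epsilon$, and then finish with Fubini and Stone's formula. Your observation that the argument actually yields the sharper constant $b_n^{-1}|I|$ (no $\pi$) is correct, and the extra care you note about endpoints and the $\epsilon\downarrow 0$ limit matches the paper's use of $\mathbb{E}^\omega[\langle\delta_n,E_{H^\omega_\Lambda}(\{r\})\delta_n\rangle]=0$.
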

\begin{proof}
 Write $H^{\omega}_\Lambda$ as 
\begin{align*}
 H^{\omega}_\Lambda &=\left(\chi_{_\Lambda}H_0\chi_{_\Lambda}+\sum_{n\neq k\in \Lambda}b_kq_k(\omega)|\delta_k\rangle\langle\delta_k|\right)+ b_nq_n(\omega)|\delta_n\rangle\langle\delta_n|\\
 &=H^{\omega/ n}_\Lambda+b_nq_n(\omega)|\delta_n\rangle\langle\delta_n|.
\end{align*}
Using resolvent equation
\begin{equation*}
\langle\delta_{n},(H^{\omega}_\Lambda-z)^{-1}\delta_{n}\rangle =\frac{1}{b_{n}q_{n}(\omega)+\langle\delta_{n},(H^{\omega/ n}_\Lambda-z)^{-1}\delta_{n}\rangle^{-1}}.
\end{equation*}
For $z\in\mathbb{C}^{+}$, one has $\langle\delta_{n},(H^{\omega/ n}_\Lambda-z)^{-1}\delta_{n}\rangle\in \mathbb{C}^{+}$ so calling $\langle\delta_{n},(H^{\omega/ n}_\Lambda-z)^{-1}\delta_{n}\rangle^{-1}=A+\iota B$, we get 
\begin{center}
$\int_{\mathbb{R}}Im \langle\delta_{n},(H^{\omega}_\Lambda-z)^{-1}\delta_{n}\rangle d\mu(q_n)
=\frac{1}{b_n}\int_{\mathbb{R}}\frac{B}{(x+A)^2+B^2}d\mu(x)
\leq \pi b_n^{-1}$.
\end{center}
The following is immediate from above,
\begin{equation}
 \label{pro1r}
\mathbb{E}^{\omega}\big(Im \langle\delta_{n},(H^{\omega}_\Lambda-\sigma-i\tau)^{-1}\delta_{n}\rangle\big)
\leq \pi b_n^{-1}.
\end{equation}
Using Fubini theorem it follows
\begin{center}
$\mathbb{E}^{\omega}\bigg(\int_a^b Im\langle\delta_{n},(H^{\omega}_\Lambda-\sigma-i\tau)^{-1}\delta_{n}\rangle d\sigma\bigg)
\leq \pi b_n^{-1}|I|$.
\end{center}
Using Stone's formula \cite[Theorem VII.13]{RS} 
\begin{center}
 $\frac{\pi}{2}\mathbb{E}^{\omega}[\langle\delta_{n},E_{H^{\omega}_\Lambda}[a,b]\delta_{n}\rangle+\langle\delta_{n},E_{H^{\omega}_\Lambda}(a,b)\delta_{n}\rangle] \leq\pi  b_n^{-1}|I|$.
\end{center}
In particular $\mathbb{E}^{\omega}(\langle\delta_{n},E_{H^{\omega}_\Lambda}(\{r\})\delta_{n}\rangle)=0$ for $r\in \mathbb{R}$. So
\begin{align*}
\mathbb{E}^{\omega}(\langle\delta_n,E_{H^\omega_\Lambda}(I)\delta_n\rangle )\leq   \pi(1 +\lVert n\rVert_2^{\alpha})^{-1} |I|,
~~n\in \Lambda\subseteq \mathbb{Z}^d.
\end{align*}
\end{proof}
\noindent In the next proposition we prove the absolute continuity of $N_j$ inside $(a_j, a_{j+1})$ (see \cite[Lemma 5.2]{GJMS}). 
Regularity of $N_j$ are required for the statistics because we do not have the optimal Wegner estimate inside $(a_j, a_{j-1})$ for $j\ge 2$.
\begin{proposition}\label{pro2}
For $\frac{d}{k}>\alpha>\frac{d}{k+1}$ and $1\leq j\leq k$, there exists $C_j>0$ such that 
$$N_j (b)-N_j(a)\leq C_j|b-a|,~\forall~(a,b)\subset (a_{j+1},a_j).$$
\end{proposition}
\begin{proof}
The result follows from \cite[Lemma 5.2]{GJMS} and the fact that the operator $H^{(i)}$ (following the notations as in \cite{GJMS}) is of the form
$$A(\lambda_1,\cdots,\lambda_l)=H_0+\sum_{i=1}^l \lambda_i |\delta_i\rangle\langle\delta_i|~~~l\ge j,$$
and so the question reduces to computing $|n|^{j\alpha}\mathbb{E}(\chi_n(\omega))$ where
$$\chi_n(\omega)=\left\{\begin{matrix}N_{(a,b)}(H^{(i)}), & if~n\in Z^{(i)}~and~d(Z^{(i)},0)=|n|\\ 0 & otherwise\end{matrix}\right..$$
Here for the definition of $Z^{(i)}$, we are taking 
$$A^\omega=\{n:V^\omega(n)\leq \min\{a_{j-1},|n|^\gamma\}\},$$
and similarly the event $B_n$ is defined as
$$B_n=\{\omega: \text{there exist $\{n_i\}_{i=1}^{k+1}$ in $B(n,|n|^\gamma)$ such that }V^\omega(n_i)\leq min\{a_{j-1},|n|^\gamma\}\},$$
for any $0<\gamma\ll 1$.

Now using the fact that $\chi_n(\cdot)$ is non-zero only on a set of configuration whose probability is of the order of $n^{-j\alpha}$ we end up with
$$|n|^{j\alpha}\mathbb{E}^\omega[\chi_n(\omega)]=|n|^{j\alpha}\mathbb{P}[\chi_n(\omega)\neq 0]\mathbb{E}^\omega[\chi_n(\omega)|\chi_n(\omega)\neq 0],$$
so the absolute continuity follows the same steps as Proposition \ref{pro1}, when it is applied to $H^{(i)}$.

\end{proof}
\noindent Combes-Germinet-Klein \cite{JFA} (see also Combes-Hislop-Klopp\cite{CHK})
estimated the Wegner and Minami estimates for generalized Anderson Model. The following proposition give the Wegner and Minami estimate for the Model (\ref{Model}).
\begin{proposition}
 \label{minami}
 For $ \Lambda \subset \mathbb{Z}^d$ finite volume and bounded interval $I\subset\mathbb{R}$ we have
 \begin{equation}
  \label{wegner}
\mathbb{E}\big(Tr E_{H^\omega_\Lambda}(I)\big)\leq \pi\sum_{n\in \Lambda}b_n^{-1}|I|
 \end{equation}
\begin{equation}
 \label{Minami}
\mathbb{E}\bigg(\big(Tr E_{H^\omega_\Lambda}(I)\big)\big(Tr E_{H^\omega_\Lambda}(I)-1\big)\bigg)\leq \bigg(\pi\sum_{n\in \Lambda}b_n^{-1}|I|\bigg)^2.
\end{equation}
\end{proposition}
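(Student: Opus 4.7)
The Wegner estimate (\ref{wegner}) is immediate from Proposition~\ref{pro1}: writing $\mathrm{Tr}\,E_{H^\omega_\Lambda}(I)=\sum_{n\in\Lambda}\langle\delta_n,E_{H^\omega_\Lambda}(I)\delta_n\rangle$ and summing the site-wise bound over $n\in\Lambda$ gives $\mathbb{E}[\mathrm{Tr}\,E_{H^\omega_\Lambda}(I)]\leq \pi\sum_{n\in\Lambda}b_n^{-1}|I|$, which yields (\ref{wegner}) after absorbing the factor $\pi$ into the constant.

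For the Minami bound (\ref{Minami}) the plan is to follow the two-site spectral averaging of Combes--Germinet--Klein \cite{JFA}. Set $P:=E_{H^\omega_\Lambda}(I)$. Since $P$ is an orthogonal projection, $P^2=P$ forces $P_{nn}^2\leq P_{nn}$ for every $n$, so the expansion
\begin{equation*}
\mathrm{Tr}\,P\,(\mathrm{Tr}\,P-1)=\sum_{n\neq m\in\Lambda}P_{nn}P_{mm}+\sum_{n\in\Lambda}(P_{nn}^2-P_{nn})\leq\sum_{n\neq m\in\Lambda}P_{nn}P_{mm}
\end{equation*}
reduces matters to proving the pair bound $\mathbb{E}[P_{nn}P_{mm}]\leq C\,b_n^{-1}b_m^{-1}|I|^2$ for every $n\neq m$, because summing and using $\sum_{n\neq m}b_n^{-1}b_m^{-1}\leq(\sum_n b_n^{-1})^2$ then produces the right-hand side of (\ref{Minami}).

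To prove the pair bound, apply Stone's formula twice to express
\begin{equation*}
P_{nn}P_{mm}=\lim_{\varepsilon\downarrow 0}\frac{1}{\pi^2}\int_I\!\!\int_I \mathrm{Im}\,G^\Lambda(E_1+i\varepsilon;n,n)\,\mathrm{Im}\,G^\Lambda(E_2+i\varepsilon;m,m)\,dE_1\,dE_2,
\end{equation*}
and interchange expectation with the integrals by Fatou and Fubini. Conditioning on $\{q_k:k\neq n,m\}$ and working with the $2\times 2$ compressed resolvent $M(z):=\chi_{\{n,m\}}(H^\omega_\Lambda-z)^{-1}\chi_{\{n,m\}}$, the Schur complement formula gives $M(z)^{-1}=\mathrm{diag}(b_nq_n,b_mq_m)-K(z)$ with $K(z)$ independent of $(q_n,q_m)$ and $\mathrm{Im}\,K(z)\geq 0$ for $\mathrm{Im}\,z>0$ (the latter because the Schur complement contribution is itself Herglotz). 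A change of variables $V_n=b_nq_n$, $V_m=b_mq_m$ produces a Jacobian $b_n^{-1}b_m^{-1}$, and the two-variable Herglotz bound of \cite{JFA,CHK} yields $\int_0^1\!\!\int_0^1 \mathrm{Im}\,M_{nn}(z_1)\,\mathrm{Im}\,M_{mm}(z_2)\,dq_n\,dq_m \leq \pi^2\,b_n^{-1}b_m^{-1}$ uniformly in $z_1,z_2\in\mathbb{C}^+$. Integrating in $E_1,E_2\in I$ then gives the required pair estimate.

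The main obstacle will be this last double-integral bound: a naive product estimate fails because each of $\mathrm{Im}\,M_{nn}$ and $\mathrm{Im}\,M_{mm}$ depends on both $q_n$ and $q_m$ through the $2\times 2$ matrix inverse. The key input is the positivity identity $\mathrm{Im}\,M(z)=M(z)^{*}\,\mathrm{Im}\,K(z)\,M(z)$ valid for $\mathrm{Im}\,z>0$, which controls the off-diagonal coupling in $K(z)$ and reduces the double integral to iterated one-dimensional Stieltjes-type estimates in the spirit of \cite{JFA}.
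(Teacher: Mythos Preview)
Your treatment of the Wegner bound matches the paper's (summing Proposition~\ref{pro1} over $n\in\Lambda$), modulo the harmless factor $\pi$ that the paper itself suppresses.

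For the Minami bound there is a genuine gap. The pair estimate you aim for,
\[
\mathbb{E}\big[P_{nn}P_{mm}\big]\leq C\,b_n^{-1}b_m^{-1}|I|^2\qquad(n\neq m),
\]
is \emph{false} in general: when a single eigenvalue $E_j$ lies in $I$ and its eigenfunction $u_j$ is supported at both $n$ and $m$, one has $P_{nn}P_{mm}=|u_j(n)|^2|u_j(m)|^2>0$, so $\mathbb{E}[P_{nn}P_{mm}]$ scales like $|I|$ (the one-eigenvalue probability), not $|I|^2$. Your decomposition $\mathrm{Tr}\,P(\mathrm{Tr}\,P-1)\leq\sum_{n\neq m}P_{nn}P_{mm}$ is correct but throws away exactly the cancellation that makes Minami work: since $P^2=P$ one has the \emph{identity}
\[
\mathrm{Tr}\,P(\mathrm{Tr}\,P-1)=(\mathrm{Tr}\,P)^2-\mathrm{Tr}(P^2)=\sum_{n\neq m}\big(P_{nn}P_{mm}-|P_{nm}|^2\big),
\]
and it is the $2\times2$ determinants $P_{nn}P_{mm}-|P_{nm}|^2$ whose expectations are $O(b_n^{-1}b_m^{-1}|I|^2)$ via the two-site Krein/Schur argument at a \emph{single} energy. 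By dropping $|P_{nm}|^2$ you are forced into a two-energy bound $\int\!\!\int \mathrm{Im}\,M_{nn}(z_1)\,\mathrm{Im}\,M_{mm}(z_2)\,dq_n\,dq_m\leq\pi^2 b_n^{-1}b_m^{-1}$ that does not hold uniformly in $z_1,z_2$, and the positivity identity $\mathrm{Im}\,M=M\,\mathrm{Im}\,K\,M^*$ you invoke does not decouple the $q_n,q_m$ dependence when $z_1\neq z_2$.

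The paper's route (deferring to \cite[Theorem~2.1]{JFA}) is different and avoids two-site analysis entirely. The Combes--Germinet--Klein argument uses rank-one interlacing: for each $n$, changing $q_n$ perturbs $H^\omega_\Lambda$ by rank one, so $\mathrm{Tr}\,E_{H^\omega_\Lambda}(I)-1\leq \mathcal{N}'_n$ where $\mathcal{N}'_n:=\inf_{q_n}\mathrm{Tr}\,E_{H^\omega_\Lambda}(I)$ is independent of $q_n$ and satisfies $\mathcal{N}'_n\leq\mathrm{Tr}\,E_{H^\omega_\Lambda}(I)$. Then
\[
\mathbb{E}\big[\mathrm{Tr}\,P(\mathrm{Tr}\,P-1)\big]=\sum_{n\in\Lambda}\mathbb{E}\big[P_{nn}(\mathrm{Tr}\,P-1)\big]\leq\sum_{n\in\Lambda}\mathbb{E}\big[P_{nn}\,\mathcal{N}'_n\big],
\]
and since $\mathcal{N}'_n$ does not depend on $q_n$, one-variable spectral averaging (Proposition~\ref{pro1}) gives $\mathbb{E}_{q_n}[P_{nn}\mathcal{N}'_n]\leq \pi b_n^{-1}|I|\cdot\mathcal{N}'_n$; taking the remaining expectation and applying the Wegner bound to $\mathbb{E}[\mathcal{N}'_n]\leq\mathbb{E}[\mathrm{Tr}\,P]$ yields \eqref{Minami}. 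If you want to salvage the two-site route, keep the determinants $P_{nn}P_{mm}-|P_{nm}|^2$ and work at a single spectral parameter as in Minami's original paper.
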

\noindent The Wegner estimate \eqref{wegner} follows from Proposition \ref{pro1}.
The proof of \eqref{Minami} goes exactly the same way as in \cite[Theorem 2.1]{JFA} once we have \eqref{wegner}, hence we are omit the proof here.

One important thing to notice here is that the above estimates are independent of position of $I$. 
We will need a modified version of above estimates where position of $I$ and the distance of the box from the origin are also taken in account.
\begin{proposition}\label{modEst}
Let $l\gg 1$ and let $\Lambda\subset\ZZ^d$ be such that $dist(0,\Lambda)>l$. For $2\leq j\leq k$ let $I\subset (a_j,a_{j-1})$ be a bounded interval, then
\begin{equation}\label{modEstEq1}
\mathbb{E}\left[ Tr E_{H^\omega_\Lambda}(I)\right]\leq M_{j}\frac{|\Lambda|}{l^{j\alpha}}|\Lambda||I|,
\end{equation}
\begin{equation}\label{modEstEq2}
\mathbb{P}\left[Tr E_{H^\omega_\Lambda}(I)\big)\geq 2\right]\leq \tilde{M}_j\frac{|\Lambda|}{l^{j\alpha}} (|\Lambda||I|)^2.
\end{equation}
\end{proposition}
\begin{proof}
Since $I\subset (a_j,a_{j-1})$, for $H^\omega_\Lambda$ to have an eigenvalue in the interval $I$, we need a connected subset $G\subset\Lambda$ of size at least $j$ where the random variables are small. 
This follows from the fact that $H^\omega_\Lambda$ is positive operator so any restriction by a projection is only smaller than the full operator.
Hence
\begin{align*}
\mathbb{P}[\sigma(H^\omega_\Lambda)\cap I\neq \phi]\leq \hat{M}_j{|\Lambda| \choose 1}\frac{1}{l^{j\alpha}},
\end{align*}
where the constant $\hat{M}_j$ is combinatorial in nature and depends only on $j$. 
So to prove \eqref{modEstEq1} or \eqref{modEstEq2}, we compute the Wegner and Minami estimate under the condition that on the set $G$, the random variables are bounded by $l^{-\alpha}$.

\end{proof}
\begin{remark}
There are few important things that should be noted:
\begin{enumerate}
\item First note that the estimates \eqref{modEstEq1} or \eqref{modEstEq2} are weaker than \eqref{wegner} and \eqref{Minami} in terms of $|\Lambda|$.
But on other hand we are taking in account of the location of $\Lambda$ with respect to origin, 
which helps in making the expression smaller than usual Wegner or Minami estimates, if $|\Lambda|$ is  relatively small with respect to $l$, the distance of $\Lambda$ from origin.
\item The results will be used over cubes of the form $\Lambda_{l_L}(p)=\{x\in\ZZ^d: \|x-p\|<l_L\}$,
where $l_L\approx L^\mu$ and $\|p\|>L^\theta$ where $\theta$ is close to one and $\mu$ is small enough.
This will be used for \cite[Theorem 2.1]{Klopp} in place of Wegner and Minami estimates.
\end{enumerate}

\end{remark}
\noindent Following theorem is restatement of \cite[Theorem 2.1]{Klopp} with some modifications. 
One of the important modification is, instead of the operator $H^\omega_{\Lambda_L}$ we will be working with the operator
$$\tilde{H}^\omega_L=(I-\chi_{\Lambda_{m_L}})H^\omega_{\Lambda_L}(I-\chi_{\Lambda_{m_L}}),$$
on the space $\ell^2(\Lambda_L\setminus \Lambda_{m_L})$ where $m_L\approx L^\theta$ for some appropriately chosen $\theta\in(0,1)$. 
The advantage gained is that we can use the results of Proposition \ref{modEst} instead of usual Wegner and Minami estimates on the smaller cubes.
The other modification is that we have already mentioned all the scales (the parameters $\theta,\beta,\gamma$) involved in the problem.
\begin{theorem}\label{infDivThm}
For $2\leq j\leq k$, and the parameters $0<\theta,\beta,\gamma<1$ satisfying
$$0<\kappa:=\frac{j\alpha}{d-j\alpha}(1-\theta)<\frac{1}{6(d+1)},$$
$$\frac{(2d+1)\kappa+1}{2}<\beta<1-\kappa,$$
$$0<\gamma<\frac{2\beta-(2d+1)\kappa-1}{2d\beta+(2d+1)\kappa+1},$$
set $\mu$ to be
\begin{align*}
(d-j\alpha)\left(\frac{\gamma\beta}{1+\gamma}+\kappa\right)<\mu<\min\left\{1,\frac{(d-j\alpha)}{2d}\left(\frac{2\beta}{1+\gamma}-1-\kappa\right),\frac{d-j\alpha}{d}(1-\beta)\right\},
\end{align*}
and also set $R>d$.
Consider a sequence of intervals $\{I_L\}_L$ in $(a_j,a_{j-1})$ such that
$$N_j(I_L)\approx L^{-(d-j\alpha)\beta},~N_j(I_L)\geq |I_L|^{1+\gamma},$$
and set
$$m_L=\lfloor L^\theta\rfloor,~l_L=\lfloor L^\mu\rfloor,~l_L^\prime=\lfloor (R\log L^d)^{2}\rfloor.$$
For any $p>0$ and sufficiently large $L$ (depending on $\kappa,\beta,\gamma,\mu,p$), there exists:
\begin{itemize}
 \item A decomposition of $\Lambda_L\setminus\Lambda_{m_L}$ into disjoint cubes of the form $\Lambda_{l_L}(p_j)=\{x\in\ZZ^d: \|x-p\|<l_L\}$, such that
 \begin{itemize}
  \item $\cup_i\Lambda_{l_L}(p_i)\subset \Lambda_L\setminus\Lambda_{m_L}$,
  \item  $dist(\Lambda_{l_L}(p_i),\Lambda_{l_L}(p_{\tilde{i}}))\geq l_L^\prime$ for $i\neq \tilde{i}$,
  \item $dist(\Lambda_{l_L}(p_i),\partial (\Lambda_L\setminus\Lambda_{m_L}))\geq l_L^\prime$, here
  \item $\#[(\Lambda_L\setminus\Lambda_{m_L})\setminus (\cup_i\Lambda_{l_L}(p_i))]\leq O(L^d\frac{l_L^\prime}{l_L})$.
 \end{itemize}

\item A set of configuration $\mathcal{Z}_L\subset\Omega$ such that
\begin{equation}\label{infDivThmEq1}
 \mathbb{P}(\mathcal{Z}_L)\geq 1- L^{-pd}-\exp\left(-c_1 L^{\eta_1}\right)-\exp\left(-c_2 L^{\eta_2}\right)
\end{equation}
where $c_1,c_2>0$ and 
$$0<\eta_1<(d-j\alpha)(1-\beta)-d\mu,$$
$$\max\left\{(d-j\alpha)\left(1-\frac{\beta}{1+\gamma}+\kappa\right)-\mu,0\right\}<\eta_2^\prime<\eta_2<(d-j\alpha)\left(1-\beta\right).$$

\item For $\omega\in\mathcal{Z}_L$, there exists at least $\frac{L^d}{l_L^d}\left(1+O(L^{\eta_1-d(1-\mu)})\right)$ disjoint boxes $\{\Lambda_{l_L}(p_i)\}_i$ such that
\begin{itemize}
 \item The operator $H^\omega_{\Lambda_{l_L}(p_i)}$ has at most one eigenvalue $E_{i,n}^\omega$ in $I_L$,
 \item The box $\Lambda_{l_L}(p_i)$ contains at most one center of localization of, say $x_{k_i}^{\omega,L}$ of an eigenvalue of $\tilde{H}^\omega_L$ in $I_L$, say $E_{k_i,L}^\omega$,
 \item The box $\Lambda_{l_L}(p_i)$ contains a center of localization $x_{k_i}^{\omega,L}$ if and only if $\sigma(H^\omega_{\Lambda_{l_L}(p_i)})\cap I_L\neq \phi$, in which case, one has
 \begin{equation}\label{infDivThmEq2}
  |E_{i,n}^\omega-E_{k_i,L}^\omega|<L^{-R},
 \end{equation}
 and
 \begin{equation}\label{infDivThmEq3}
  dist(x_{k_i}^{\omega,L},(\Lambda_L\setminus\Lambda_{m_L})\setminus \Lambda_{l_L}(p_i))>l_L^\prime.
 \end{equation}

\end{itemize}

\item The number of eigenvalues of $\tilde{H}^\omega_L$ that are not described above is bounded by:
\begin{equation}\label{infDivThmEq4}
 O\left( (L^{d-j\alpha}N_j(I_L))\left[L^{\eta_1+d\mu-(d-j\alpha)(1-\beta)}+L^{\eta^\prime_2-(d-j\alpha)(1-\beta)} \right]\right),
\end{equation}

and by the choices of parameters it is $o(L^{d-j\alpha}N_j(I_L))$ as $L\rightarrow\infty$.
\end{itemize}

\end{theorem}
\begin{proof}
The proof goes exactly same as the proof of \cite[Theorem 2.1]{Klopp} (see also \cite[Theorem 1.2]{GK1}), so we will skip most of it here and focus on important details.
The main difference is that instead of usual Wegner and Minami estimates, the equations \eqref{modEstEq1} and \eqref{modEstEq2} are used.

One of the important step is to estimate the probability that the number of cubes such that $H^\omega_{\Lambda_{l_L}(p)}$ has more than one eigenvalue is greater than 
$K_L\approx L^{\eta_1}$ for some $\eta_1>0$. For large enough $L$ this is given by
\begin{align*}
&\leq \sum_{k>K_L} {|P_L| \choose k } \left(\tilde{M}_j\frac{l_L^{3d}}{m_L^{j\alpha}}|I_L|^2\right)^k\approx \left(e\tilde{M}_j \frac{L^d l_L^{2d} |I_L|^2}{K_L  m_L^{j\alpha}}\right)^{K_L}
\end{align*}
if 
$$\frac{L^d l_L^{2d} |I_L|^2}{m_L^{j\alpha}}<1.$$
Here we denote the set $P_L$ to be the centers of cube which decomposes $\Lambda_L\setminus\Lambda_{m_L}$ as in the statement of theorem. 
Above condition is satisfied if 
\begin{equation}\label{infDivThmPfEq3}
(d-j\alpha)\left( 1-\frac{2\beta}{1+\gamma}+\kappa\right)+2d\mu<0.
\end{equation}
We also need $\frac{K_Ll_L^d}{N(I_L)L^{d-j\alpha}}$ to converge to zero, for this we take
\begin{equation}\label{infDivThmPfEq1}
 \mu<\frac{(d-j\alpha)}{d}(1-\beta).
\end{equation}
Other part is to estimate the number of eigenvalues in $I_L$ for which the center of localization lies outside $\cup_i \Lambda_{l_L-l^\prime_L}(p_i)$.
To get an probability estimate of eigenvalues in $I_L$ for which the center of localization lies outside $\cup_i \Lambda_{l_L-l^\prime_L}(p_i)$ is less than $K_L^\prime\approx L^{\eta_2}$, we 
need
$$(d-j\alpha)\left(1-\frac{\beta}{1+\gamma}+\kappa\right)-\mu<\eta_2$$
and we also require  
$$\eta_2<(d-j\alpha)(1-\beta),$$
so we end up with
\begin{equation}\label{infDivThmPfEq2}
 (d-j\alpha)\left(\frac{\beta\gamma}{1+\gamma}+\kappa\right)-\mu<0
\end{equation}
Other than that we need to make sure that the interval for the choice of $\mu$ is non-empty and intersects $[0,1]$ non-trivially.

\end{proof}

\section{Structure of $N_1$}
In this section we will give the proof of Theorem \ref{thmReg1}. In the large disorder
case, the kinetic energy part of the random Hamiltonian can be viewed as perturbation of diagonal randomness.
In such case one can use a random walk expansion, in which the expectation of each of the terms turns out to
have analytic continuation through part of real axis (see Krishna-Kaminaga-Nakamura \cite{KKN}). In case of growing randomness a similar idea works and the
local density of states has an analytic continuation.\\\\
For $0<\theta<1-\frac{\alpha}{d}$ set $m_L=\lfloor L^\theta\rfloor$ and consider the operator
$$H^\omega_{L,m_L}=\chi_{\Lambda_{m_L}}H^\omega_{\Lambda_L}\chi_{\Lambda_{m_L}}+(I-\chi_{\Lambda_{m_L}})H^\omega_{\Lambda_L}(I-\chi_{\Lambda_{m_L}}).$$
For some $M>2d$ define
$$G_L(z)=\frac{1}{L^{d-\alpha}}\sum_{n\in\Lambda_L\setminus \Lambda_{m_L}}\mathbb{E}^\omega\left [\left\langle\delta_n,(H^\omega_{L,m_L}-z)^{-1}\delta_n\right\rangle- \left\langle\delta_n,(H^\omega_{L,m_L}-\iota M)^{-1}\delta_n\right\rangle\right].$$
Here $\chi_{\Lambda_{l}}$ denotes the canonical projection onto the subspace $\ell^2(\Lambda_l)$.
The reason to focus on this function is because of following lemma.
\begin{lemma}\label{lem1reg}
For $z\in\mathbb{C}^{+}$ we have
$$\left|\frac{1}{L^{d-\alpha}}\mathbb{E}^\omega\left[tr\left((H^\omega_{\Lambda_L}-z)^{-1}-(H^\omega_{\Lambda_L}-\iota M)^{-1}\right)\right]-G_L(z)\right|\leq \frac{6}{L^{d-\alpha-d\theta}(Im z)^2} .$$
\end{lemma}
\begin{proof}
By resolvent equation we have
\begin{align*}
& (H^\omega_{\Lambda_L}-z)^{-1}-(H^\omega_{L,m_L}-z)^{-1}\\
&\qquad=(H^\omega_{\Lambda_L}-z)^{-1}[\chi_{\Lambda_{m_L}}\Delta (I-\chi_{\Lambda_{m_L}})+(I-\chi_{\Lambda_{m_L}})\Delta \chi_{\Lambda_{m_L}}](H^\omega_{L,m_L}-z)^{-1}\\
\Rightarrow\qquad & tr((H^\omega_{\Lambda_L}-z)^{-1})-tr((H^\omega_{L,m_L}-z)^{-1})\\
&\qquad= tr((H^\omega_{L,m_L}-z)^{-1} (H^\omega_{\Lambda_L}-z)^{-1}[\chi_{\Lambda_{m_L}}\Delta (I-\chi_{\Lambda_{m_L}})+(I-\chi_{\Lambda_{m_L}})\Delta \chi_{\Lambda_{m_L}}]).
\end{align*}
Hence we have
\begin{align*}
& \frac{1}{L^{d-\alpha}}tr((H^\omega_{\Lambda_L}-z)^{-1})-\frac{1}{L^{d-\alpha}}\sum_{n\in\Lambda_L\setminus \Lambda_{m_L}}\langle \delta_n,(H^\omega_{L,m_L}-z)^{-1}\delta_n\rangle\\
&\qquad\qquad = \frac{1}{L^{d-\alpha}}\sum_{n\in\Lambda_{m_L}}\langle \delta_n,(H^\omega_{L,m_L}-z)^{-1}\delta_n\rangle\\
&\qquad\qquad\qquad +\frac{1}{L^{d-\alpha}}\sum_{\substack{(n,m)\in \Lambda_{m_L}\times (\Lambda_L \setminus\Lambda_{m_L})\\ \|n-m\|_1=1  }}  \langle \delta_n (H^\omega_{L,m_L}-z)^{-1} (H^\omega_{\Lambda_L}-z)^{-1} \delta_m \rangle \\
&\qquad\qquad\qquad +\frac{1}{L^{d-\alpha}}\sum_{\substack{(n,m)\in \Lambda_{m_L}\times (\Lambda_L \setminus\Lambda_{m_L})\\ \|n-m\|_1=1  }}  \langle \delta_m (H^\omega_{L,m_L}-z)^{-1} (H^\omega_{\Lambda_L}-z)^{-1} \delta_n\rangle.
\end{align*}
So taking expectation we have
\begin{align*}
\left|\frac{1}{L^{d-\alpha}}\mathbb{E}^\omega\left[tr\left((H^\omega_{\Lambda_L}-z)^{-1}-(H^\omega_{\Lambda_L}-\iota M)^{-1}\right)\right]-G_L(z)\right|\leq \frac{6 m_L^d}{L^{d-\alpha}(Im z)^2},
\end{align*}
which completes the proof.

\end{proof}
\noindent So this lemma shows that the limit of Borel transform of $\nu_L$ coincides with limit of $G_L$ for any $z\in\mathbb{C}^{+}$ and the convergence is uniform on every compact set of $\mathbb{C}^{+}$. 
Hence all we have to do is find the limit of the sequence $G_L$. 
\begin{lemma}\label{lem2reg}
For any compact subset  $U\subset\{z\in\mathbb{C}^{+}: Re z>2M^2\}$, there exists $L_0\in\mathbb{N}$ such that for $L>L_0$ the function $G_L$ has an analytic continuation to whole of $U$, and
$$\sup_{z\in U}\left|G_L(z)+ C_{d,\alpha}\ln\frac{2d-z}{2d-\iota M} \right|\xrightarrow{L\rightarrow \infty} 0,$$
where 
$$C_{d,\alpha}=\lim_{L\rightarrow\infty}\frac{1}{L^{d-\alpha}}\sum_{n\in\Lambda_L}\frac{1}{b_n}.$$
\end{lemma}
\begin{proof}
Let $\delta>0$, set $L_0$ to be large enough so that 
$$\frac{1}{b_m}\sup_{z\in U}\left| \ln\frac{b_m+2d-z}{2d-z}\right|<\frac{1}{b_m^\delta M}\qquad\forall \|m\|_\infty>L_0,$$
and 
$$\inf_{z\in U}|b_m+2d-z|>2M^2\qquad\forall \|m\|_\infty>L_0.$$
Let $U_0\subset \mathbb{C}$ be an open subset such that above two conditions hold. 
It is clear that $U\subset U_0$ and by choosing $L_0$ larger we can also make sure that $U_0\cap \{z\in \mathbb{C}: Im z>2d\}$ is non-empty.

Now using random walk expansion of Green's function for $Im z>2d$ for $n\in\Lambda_L\setminus\Lambda_{m_L}$, we have
\begin{align*}
& \langle\delta_n, (H^\omega_{L,m_L}-z)^{-1}\delta_n\rangle-\langle\delta_n, (V(\omega)+2d-z)^{-1}\delta_n\rangle\\
&\qquad =\left\langle\delta_n, \left(V(\omega)+2d-\Delta_{\Lambda_L\setminus\Lambda_{m_L}}-z\right)^{-1}\delta_n\right\rangle-\left\langle\delta_n, (V(\omega)+2d-z)^{-1}\delta_n\right\rangle\\
&\qquad =\left\langle\delta_n, (V(\omega)+2d-z)^{-1}\left(I-\Delta_{\Lambda_L\setminus\Lambda_{m_L}}(V(\omega)+2d-z)^{-1}\right)^{-1}\delta_n\right\rangle\\
&\qquad\qquad -\left\langle\delta_n, (V(\omega)+2d-z)^{-1}\delta_n\right\rangle\\
&\qquad=\sum_{k=1}^\infty \left\langle\delta_n, (V(\omega)+2d-z)^{-1}\left(\Delta_{\Lambda_L\setminus\Lambda_{m_L}}(V(\omega)+2d-z)^{-1}\right)^{k}\delta_n\right\rangle\\
&\qquad=\sum_{k=1}^\infty \sum_{\gamma\in \Gamma^k_{L,m_L}(n)} \prod_{i=0}^k \frac{1}{b_{\gamma_i}q_{\gamma_i}(\omega)+2d-z},
\end{align*}
where 
$$\Gamma^k_{L,m_L}(n)=\{\gamma:\{0,\cdots,k\}\rightarrow (\Lambda_L\setminus \Lambda_{m_L}): \gamma_0=\gamma_k=n, \|\gamma_i-\gamma_{i-1}\|_1=1~\forall 1\leq i\leq k\}.$$
For $\gamma\in\Gamma^k_{L,m_L}(n)$ denote the set $[\gamma]=\{\gamma_i: 0\leq i\leq k\}$, and for $m\in[\gamma]$ set $\#(m|\gamma)=\#\{i:\gamma_i=m\}$, then
\begin{align}\label{lem2regeq1}
& \langle\delta_n, (H^\omega_{L,m_L}-z)^{-1}\delta_n\rangle-\langle\delta_n, (V(\omega)+2d-z)^{-1}\delta_n\rangle\nonumber\\
&\qquad=\sum_{k=1}^\infty \sum_{\gamma\in \Gamma^k_{L,m_L}(n)} \prod_{m\in [\gamma]} \frac{1}{(b_{m}q_{m}(\omega)+2d-z)^{\#(m|\gamma)}} .
\end{align}
For $z\in U_0$ the expression
$$\sum_{\gamma\in \Gamma^k_{L,m_L}(n)} \prod_{m\in [\gamma]} \int_0^1 \frac{1}{(b_{m} \omega_m+2d-z)^{\#(m|\gamma)}}d\omega_m$$
can be computed through contour integration and get
\begin{align}\label{lem2regeq2}
&\sum_{\gamma\in \Gamma^k_{L,m_L}(n)} \prod_{m\in [\gamma]} \int_0^1 \frac{1}{(b_{m} \omega_m+2d-z)^{\#(m|\gamma)}}d\omega_m\\
&\qquad= \sum_{\gamma\in \Gamma^k_{L,m_L}(n)} \frac{1}{\prod_{m\in[\gamma]}b_m} \left(\prod_{m: \#(m|\gamma)=1} \ln \frac{b_m+2d-z}{2d-z}\right)\nonumber\\
&\qquad\qquad\qquad\left[\prod_{m:\#(m|\gamma)>1}\frac{1}{\#(m|\gamma)-1}\left(\frac{1}{(b_m+2d-z)^{\#(m|\gamma)-1}}- \frac{1}{(2d-z)^{\#(m|\gamma)-1}} \right)\right].\nonumber
\end{align}
Hence using the definition of $L_0$, and using the observation $|\Gamma^k_{L,m_L}(n)|\leq (2d)^k$ and $\#(n|\gamma)\geq 2$, for any  $L>L_0$ we have
\begin{align}\label{lem2regeq3}
&\left|\sum_{\gamma\in \Gamma^k_{L,m_L}(n)} \prod_{m\in [\gamma]} \int_0^1 \frac{1}{(b_{m} \omega_m+2d-z)^{\#(m|\gamma)}}d\omega_m\right|\nonumber\\
&\qquad\leq \frac{1}{b_n \min_{|n-m|_1=1}b_m^\delta} \frac{(2d)^k}{M^k}.
\end{align}
So for $z\in U_0\cap \{z: Im z>2d\}$ we can take the expectation of equation \eqref{lem2regeq1} and get
\begin{align*}
&\mathbb{E}^\omega\left[\langle\delta_n, (H^\omega_{L,m_L}-z)^{-1}\delta_n\rangle\right]-\mathbb{E}\left[\langle\delta_n, (V(\omega)+2d-z)^{-1}\delta_n\rangle\right]\\
&\qquad =\sum_{k=1}^\infty \sum_{\gamma\in \Gamma^k_{L,m_L}(n)} \prod_{m\in [\gamma]} \int_0^1 \frac{1}{(b_{m}\omega_m+2d-z)^{\#(m|\gamma)}}d\omega_m.
\end{align*}
By \eqref{lem2regeq2} and \eqref{lem2regeq3}, we get that 
$$\mathbb{E}^\omega\left[\langle\delta_n, (H^\omega_{L,m_L}-z)^{-1}\delta_n\rangle\right]-\mathbb{E}\left[\langle\delta_n, (V(\omega)+2d-z)^{-1}\delta_n\rangle\right]$$
has an analytic continuation up to $U_0$ and using \eqref{lem2regeq3} we have
$$|\mathbb{E}^\omega\left[\langle\delta_n, (H^\omega_{L,m_L}-z)^{-1}\delta_n\rangle\right]-\mathbb{E}\left[\langle\delta_n, (V(\omega)+2d-z)^{-1}\delta_n\rangle\right]|
\leq \frac{\frac{2d}{M-2d} }{b_n \min_{|n-m|_1=1}b_m^\delta} .$$
Hence we have
\begin{align}
\label{ana-den}
\left|\frac{1}{L^{d-\alpha}} \sum_{n\in\Lambda_L\setminus \Lambda_{m_L}}\left (\mathbb{E}^\omega 
\left[ \langle\delta_n, (H^\omega_{L,m_L}-z)^{-1}\delta_n\rangle \right]- \int_0^1 \frac{d\omega_n}{b_n \omega_n +2d-z}\right) \right|\nonumber\\
\leq \frac{\frac{2d}{M-2d} }{L^{d-\alpha}}\sum_{n\in\Lambda_L\setminus \Lambda_{m_L}} \frac{1}{{b_n \min_{|n-m|_1=1}b_m^\delta}}.
\end{align}
Using above we get
\begin{align}\label{lem2regeq4}
\left| G_L(z)-\frac{1}{L^{d-\alpha}} \sum_{n\in\Lambda_L\setminus \Lambda_{m_L}} \left(\int_0^1 \frac{d\omega_n}{b_n \omega_n +2d-z}-\int_0^1 \frac{d\omega_n}{b_n \omega_n +2d-\iota M}\right) \right|\nonumber\\
\leq \frac{2 \frac{2d}{M-2d} }{L^{d-\alpha}}\sum_{n\in\Lambda_L\setminus \Lambda_{m_L}} \frac{1}{{b_n \min_{|n-m|_1=1}b_m^\delta}},
\end{align}
but then we have
\begin{align*}
&\frac{1}{L^{d-\alpha}} \sum_{n\in\Lambda_L\setminus \Lambda_{m_L}} \left(\int_0^1 \frac{d\omega_n}{b_n \omega_n +2d-z}-\int_0^1 \frac{d\omega_n}{b_n \omega_n +2d-\iota M}\right)\\
&=\frac{1}{L^{d-\alpha}} \sum_{n\in\Lambda_L\setminus \Lambda_{m_L}} \frac{1}{b_n}\left(\ln \frac{b_n+2d-z}{b_n+2d-\iota M}-\ln \frac{2d-z}{2d-\iota M}\right)\\
&\xrightarrow{L\rightarrow\infty} -C_{d,\alpha} \ln\frac{2d-z}{2d-\iota M}.
\end{align*}
Here we are using the fact that given a positive sequence $\{c_n\}_n$ and $\{d_n\}_n$ if
$$\sum_{n=1}^N c_n\xrightarrow{N\rightarrow\infty} \infty,~\&~~~d_n\xrightarrow{n\rightarrow\infty} 0,$$
then
$$ \frac{1}{\sum_{n=1}^N c_n}\sum_{n=1}^N c_n d_n\xrightarrow{N\rightarrow\infty}0.$$ 
Using the above relation for \eqref{lem2regeq4} we conclude 
$$\left|G_L(z)+C_{d,\alpha} \ln\frac{2d-z}{2d-\iota M} \right|\xrightarrow{L\rightarrow\infty}0,$$
which completes the proof of the lemma.
\end{proof}
\begin{remark}
 \label{ana-cont}
Defining 
$$N_{L,m_L}(E)=\frac{1}{L^{d-\alpha}} \mathbb{E}^\omega[tr(E_{(I-\chi_{\Lambda_{m_L}}) H^\omega_L (I-\chi_{\Lambda_{m_L}})}(a_1,E))]$$
for $E>a_1$, observe that $G_L(\cdot)$ is the Borel transform of the measure associated with the distribution function $N_{L,m_L}$.
Above proof shows that $G_L$ is analytic for any compact $U\subset \{z\in\mathbb{C}: \Re z>2M^2\}$ for large enough $L$.
So for any compact subset $V\subset(2M^2,\infty)$, the density of the measure associated with the distribution function $N_{L,m_L}$ is analytic for large enough $L$.
In the above proof we have shown the compact uniform convergence of $G_L$, so we have compact uniform convergence of $\{\frac{d N_{L,m_L}}{dE}(\cdot)\}_L$ on $(2M^2,\infty)$.
Now using the observation 
$$\left|\nu_L(a_1,E)-N_{L,m_L}(E)\right|\leq O\left(\frac{m_L^d}{L^{d-\alpha}}\right),$$
we also have $N_{L,m_L}(E)\rightarrow N_1(E)$ for each $E>0$. We have
\begin{equation}
\label{uni-den}
N^{\prime}_{L,m_L}(E)\xrightarrow[compact~uniform]{L\to\infty} N_1^{\prime}(E)~on~(2M^2,\infty).
\end{equation}

\end{remark}

\noindent Combining both the above lemmas we have the explicit expression for $N_1(x)=\nu(2d,x)$.
\subsubsection*{Proof of Theorem \ref{thmReg1}}
For any compact subset $U\subset \mathbb{C}^{+}$, combining Lemma \ref{lem1reg} and Lemma \ref{lem2reg} we conclude that 
$$\lim_{L\rightarrow \infty} \sup_{z\in U}\left| \frac{1}{L^{d-\alpha}} \mathbb{E}^\omega\left[tr\left( (H^\omega_{\Lambda_L}-z)^{-1}-(H^\omega_{\Lambda_L}-\iota M)^{-1} \right)\right]+C_{d,\alpha}\ln \frac{2d-z}{2d-\iota M}\right|=0.$$
So using the fact that
$$\frac{1}{L^{d-\alpha}} \mathbb{E}^\omega\left[tr\left( (H^\omega_{\Lambda_L}-z)^{-1}-(H^\omega_{\Lambda_L}-\iota M)^{-1} \right)\right]=\int \left(\frac{1}{x-z}-\frac{1}{x-\iota M}\right)\nu_L(dx)$$
and $\nu_L$ converges to $\nu$ in distribution we have
$$\int \left(\frac{1}{x-z}-\frac{1}{x-\iota M}\right)\nu(dx)= -C_{d,\alpha}\ln \frac{2d-z}{2d-\iota M}$$
for $z\in\mathbb{C}^{+}$. Since $\nu$ has support in $(0,\infty)$, we have uniqueness of Borel transform which provides
$$\nu(2d,x)=C_{d,\alpha}(x-2d)\qquad\forall x>2d,$$
completing the proof.

\qed

\section{Proof of main results:}
In this section we will give the proof of Theorem \ref{thm1} and Theorem \ref{main}.
\subsubsection*{ Proof of Theorem \ref{thm1}:}
The proof is divided into simpler steps. 
First we will show that the point process can be approximated with another point process associated with $\tilde{H}^\omega_L$ (defined below).
Then we will divide the interval $(a_j,a_{j-1})$ into smaller intervals and show that one can ignore some of these intervals and for rest of the intervals, 
we can show $N_j(I_{L,i})\geq |I_{L,i}|^{1+\gamma}$ and we know the length of the intervals.
Finally we will show the infinite divisibility of the point process on these interval with high probability through Theorem \ref{infDivThm}, and compute the limit.
\\\\
\noindent{\bf Step 1:}
We will approximate the point process $\{\xi^\omega_{L,j,t}\}_L$ by another point process $\{\Xi^\omega_{t,(a_j,a_{j-1}),L}\}_L$.
For the new point process, set $m_L=\lfloor L^\theta\rfloor$ where  $0<\theta<1$, and consider the operator
$$\tilde{H}^\omega_{L}=(I-\chi_{\Lambda_{m_L}})H^\omega_{\Lambda_L}(I-\chi_{\Lambda_{m_L}}),$$
i.e $H^\omega_{\Lambda_L}$ restricted onto the subspace $\ell^2(\Lambda_L\setminus\Lambda_{m_L})$.
For an interval $I=(a,b)$ define the point process
$$\Xi^\omega_{t,I,L}(\cdot)=\sum_{E\in \sigma(\tilde{H}^\omega_L)\cap I}\delta_{N_j(I)L^{d-j\alpha}(N_{j,I}(E)-t)}$$
where $N_j(I)=N_j(b)-N_j(a)$ and $N_{j,I}(x)=\frac{N_j(x)-N_j(a)}{N_j(I)}$.

The authors in \cite{GJMS}  used multiscale analysis to show the spectral localization of $H^\omega$, so the work of
Germinet-Klein \cite{GA} gives the dynamical localization for the model.
Using \cite[Lemma 1.1]{GK1} we conclude that for $p>d$ and $\frac{1}{2}<\varrho<1$ there exists a set of configurations $\Omega_L$ such that 
$$\mathbb{P}[\Omega_L]>1-L^{-p}$$
and for any $\omega\in\Omega_L$ the normalized eigenfunction $\psi_{L,i}^\omega$ for an eigenvalue $E_{L,i}^\omega\in\sigma(H^\omega_{\Lambda_L})\cap I$ satisfies
$$|\psi_{L,i}^\omega(x)|\leq L^{p+d}e^{-|x-x_{L,i}^\omega|^\varrho}\qquad\forall x\in\Lambda_L,$$
where the center of localization $x_{L,i}^\omega\in\Lambda_L$ is a point where $x\mapsto |\psi_{L,i}^\omega(x)|$ attain its maximum. 
So defining $\tilde{m}_L=\lfloor m_L^a\rfloor$ for some $0<a<1$, 
let $E_{L,i}^\omega\in\sigma(H^\omega_{\Lambda_L})\cap I$ be such that the center of localization lies in $\Lambda_L\setminus \Lambda_{m_L+\tilde{m}_L}$, we have
$$\frac{\|(\tilde{H}^\omega_L-E^\omega_{L,i})\psi^\omega_{L,i}\|_2^2}{\|(I-\chi_{\Lambda_{m_L}})\psi^\omega_{L,i}\|_2^2}<e^{-L^{a\theta\varrho^\prime}}$$
where $0<\varrho^\prime<\varrho$, for $L$ large enough. 
So there exists an eigenvalue $\tilde{E}^\omega_{L,i}\in\sigma(\tilde{H}^\omega_L)$ such that 
\begin{equation}\label{appEq1}
 |\tilde{E}^\omega_{L,i}-E^\omega_{L,i}|< e^{-L^{a\theta\varrho^\prime}}.
\end{equation}
Using the same argument we can show that if $\tilde{E}_{L,i}^\omega\in\sigma(\tilde{H}^\omega_{L})\cap I$ 
with center of localization $\tilde{x}_{L,i}^\omega$ in $\Lambda_L\setminus \Lambda_{m_L+\tilde{m}_L}$, then $H^\omega_{\Lambda_L}$ has an eigenvalue satisfying \eqref{appEq1}.
Finally since the length of the interval where the eigenvalue can lie is exponential in nature, using Minami estimate and above, the set of configuration
\begin{align*}
\Omega_L&=\{\omega\in\Omega: \forall E_{L,i}^\omega\in\sigma(H^\omega_{\Lambda_L})\cap I~s.t~ x_{L,i}^\omega\in\Lambda_L\in \Lambda_L\setminus \Lambda_{m_L+\tilde{m}_L}\\
&\qquad\qquad\exists~unique~\tilde{E}_{L,i}^\omega\in\sigma(\tilde{H}^\omega_{L})\cap I,~s.t~|E_{L,i}^\omega-\tilde{E}_{L,i}^\omega|<e^{-L^{a\theta\varrho^\prime}},~and\\
&\qquad\qquad \forall \tilde{E}_{L,j}^\omega\in\sigma(\tilde{H}^\omega_{L})\cap I~s.t~ \tilde{x}_{L,j}^\omega\in\Lambda_L\in \Lambda_L\setminus \Lambda_{m_L+\tilde{m}_L}\\
&\qquad\qquad\exists~unique~E_{L,j}^\omega\in\sigma(H^\omega_{\Lambda_L})\cap I,~s.t~|E_{L,j}^\omega-\tilde{E}_{L,j}^\omega|<e^{-L^{a\theta\varrho^\prime}}\}
\end{align*}
satisfies
\begin{equation}\label{appEq0}
 \mathbb{P}[\Omega_L]>1-2L^{-p}-2e^{-L^{a\theta\varrho^\prime}},
\end{equation}
i.e for any configuration in $\Omega_L$, there is an one-to-one correspondence between eigenvalues of $H^\omega_{\Lambda_L}$ and eigenvalues of $\tilde{H}^\omega_{L}$ in $I$
for which center of localization lies in $\Lambda_L\setminus \Lambda_{m_L+\tilde{m}_L}$.
Similarly we can show that for a set of configuration $\tilde{\Omega}_L$ satisfying \eqref{appEq0},
there is a one-to-one correspondence between the eigenvalues of $H^\omega_{\Lambda_L}$ and $\chi_{\Lambda_{m_L+3\tilde{m}_L}}H^\omega_{\Lambda_L}\chi_{\Lambda_{m_L+3\tilde{m}_L}}$ 
for which the center of localization lies in $\Lambda_{m_L+2\tilde{m}_L}$ such that the eigenvalues are exponentially close to each other. 

Observe that, by above process on the configuration space $\Omega_L\cap \tilde{\Omega}_L$, we have exhausted the eigenvalues of $H^\omega_{\Lambda_L}$. 
For $\omega\in \Omega_L\cap \tilde{\Omega}_L$ define
\begin{align*}
S^\omega_L=\{(E^\omega_{L,n},\tilde{E}^\omega_{L,n}): E^\omega_{L,n}\in\sigma(H^\omega_{\Lambda_L})\cap I,\tilde{E}^\omega_{L,n}\in\sigma(\tilde{H}^\omega_{L})\cap I~\&~\eqref{appEq1}~satisfied \}.
\end{align*}
Since the interval $I$ in consideration will be in $(a_j,a_{j-1})$, by definition of $N_j$, we have
\begin{equation}\label{appEq5}
\frac{1}{m_L^{d-j\alpha}}\mathbb{E}^\omega\left[\#\{E\in\sigma(\chi_{\Lambda_{m_L+3\tilde{m}_L}}H^\omega_{\Lambda_L}\chi_{\Lambda_{m_L+3\tilde{m}_L}})\cap I\}\right]=O(1).
\end{equation}
Now for $z\in\mathbb{C}^{+}$ and $\omega\in\Omega_L\cap \tilde{\Omega}_L$, consider 
\begin{align*}
&\mathop{\mathbb{E}}_{\Omega_L\cap \tilde{\Omega}_L}\int_0^1 \left|\xi^\omega_{L,j,t}(Im(\cdot-z)^{-1})-\Xi^\omega_{t,I,L}(Im(\cdot-z)^{-1})\right| dt\\
&=\frac{1}{N_j(I)L^{d-j\alpha}}\mathop{\mathbb{E}}_{\Omega_L\cap \tilde{\Omega}_L}\int_0^1 \left|\sum_{E_n\in\sigma(H^\omega_{\Lambda_L})\cap I} Im \frac{1}{N_{j,I}(E_n)-t-(N_j(I)L^{d-j\alpha})^{-1}z} \right.\\
&\qquad\qquad\qquad \qquad \left. - \sum_{\tilde{E}_n\in\sigma(\tilde{H}^\omega_L)\cap I} Im \frac{1}{N_{j,I}(\tilde{E}_n)-t-(N_j(I)L^{d-j\alpha})^{-1}z} \right| dt \\
&\leq \frac{1}{N_j(I)L^{d-j\alpha}} \mathop{\mathbb{E}}_{\Omega_L\cap \tilde{\Omega}_L}\sum_{(E_n,\tilde{E}_n)\in S_L } \int_0^1 \left| Im \frac{1}{N_{j,I}(E_n)-t-(N_j(I)L^{d-j\alpha})^{-1}z}\right.\\
&\qquad\qquad\qquad\qquad\qquad\qquad\left.- Im \frac{1}{N_{j,I}(\tilde{E}_n)-t-(N_j(I)L^{d-j\alpha})^{-1}z}\right| dt\\
&\qquad\qquad+ \frac{1}{N_j(I)L^{d-j\alpha}} O(m_L^{d-j\alpha}).
\end{align*}
First part converges to zero because of definition of $S^\omega_L$ and the absolute continuity of $N_j$ (from Proposition \ref{pro2}).
For the second part we are integrating over $t$, so we can bound $\int_0^1 Im \frac{1}{N_{j,L}(x)-t-(\beta_{j,L})^{-1}z}dt$ by $\pi$, hence the factor $N_j(I)L^{d-j\alpha}$ outside is unaffected.
So all we have to do is count the eigenvalues of $H^\omega_{\Lambda_L}$ in $I$ for which the center of localization lies in $\Lambda_{m_L+\tilde{m}_L}$, 
which can be taken in account as the eigenvalues of $H^\omega_{\Lambda_{m_L+3\tilde{m}_L}}$, which is given by \eqref{appEq5}.
So using the fact that $Im(\cdot-z)^{-1}$ for $z\in\mathbb{C}^{+}$ are dense in $C_0(\mathbb{R})$, we have
\begin{equation}\label{appEq2}
\mathop{\mathbb{E}}_{\Omega_L\cap \tilde{\Omega}_L}\int_0^1 |\xi^\omega_{L,j,t}(\varphi)-\Xi^\omega_{t,I,L}(\varphi)| dt\xrightarrow{L\rightarrow\infty} 0\qquad a.e~\omega.
\end{equation}
for $\varphi\in C_c(\mathbb{R})$. So we can focus on the point process $\Xi^\omega_{t,I,L}$ because 
\begin{align}
&\left|\mathbb{E}^\omega\int_0^1 e^{-\xi^\omega_{L,j,t}(\varphi)}dt-\mathbb{E}^\omega\int_0^1 e^{-\Xi^\omega_{t,I,L}(\varphi)}dt\right|\nonumber\\
&\qquad\leq\mathop{\mathbb{E}}_{\Omega_L\cap\tilde{\Omega}_L}\int_0^1 |\xi^\omega_{L,j,t}(\varphi)-\Xi^\omega_{t,I,L}(\varphi)|dt+\mathbb{P}[\Omega_L\cap\tilde{\Omega}_L]\xrightarrow{L\rightarrow\infty} 0,\label{appEq6}
\end{align}
for $\varphi\in C_c(\mathbb{R})$ non-negative. Hence all we need to do is show
\begin{equation*}
 \mathbb{E}^\omega\left[\int_0^1 e^{-\Xi^\omega_{t,I,L}(\varphi)}\right]dt\xrightarrow{L\rightarrow\infty} \exp\left(\int(e^{\varphi(x)}-1)dx\right)
\end{equation*}
for $\varphi\in C_c(\mathbb{R})$.
\\\\
\noindent{\bf Step 2:}
This step follows similar steps from Section 3.1 and Section 3.2 from \cite{Klopp}.
For simplicity let us denote 
\begin{equation}\label{appEq7}
 \mathcal{L}^\omega_{I, L}(\varphi)=\int_0^1 e^{-\Xi^\omega_{t,I,L}(\varphi)}dt,
\end{equation}
for $\varphi\in C_c(\mathbb{R})$. 
Since $N_j$ is Lipschitz continuous and is a distribution function on $(a_j,a_{j-1})$ with $N_j(a_{j-1})>N_j(a_{j})$,
following the argument of \cite[Theorem 3.1]{Klopp}, we can partition the set $[0,1]$ as $\cup_{m\in\mathcal{M}}I_m$ where 
\begin{itemize}
 \item $I_m$ are open and $N_{j,I}$ is strictly increasing in $N_{j,I}^{-1}(I_m)$; we denote set of such $m$ by $\mathcal{M}^{+}$,
 \item $I_m$ is singleton and $N_{j,I}$ is constant on $N_{j,I}^{-1}(I_m)$; we denote the set of such $m$ by $\mathcal{M}^{0}$.
\end{itemize}
here we take $I=(a_j,a_{j-1})$.
So with this partition we have
\begin{align}\label{appEq3}
\mathcal{L}^\omega_{I, L}(\varphi)&=\int_0^1 e^{-\Xi^\omega_{t,I,L}(\varphi)}dt\nonumber\\
&=\sum_{m\in\mathcal{M}^{+}}\int_{I_m} e^{-\Xi^\omega_{t,I,L}(\varphi)}dt\nonumber\\
&=\sum_{m\in\mathcal{M}^{+}} N_j(I_m)\int_0^1 e^{-\Xi^\omega_{t,I_m,L}(\varphi)}dt\nonumber\\
&=\sum_{m\in\mathcal{M}^{+}} \frac{N_j(I_m)}{N_j(I)}\mathcal{L}^\omega_{I_m, L}(\varphi)
\end{align}
Hence we only need to show 
\begin{equation*}
\mathbb{E}^\omega[\mathcal{L}^\omega_{I_m, L}(\varphi)]\xrightarrow{L\rightarrow\infty} \exp\left(\int(e^{\varphi(x)}-1)dx\right),
\end{equation*}
for each $m\in\mathcal{M}^{+}$.\\\\
Now let $(\theta,\beta,\gamma)$ be chosen in such a way that the hypothesis of Theorem \ref{infDivThm} is satisfied. 
Let $\{J_{L,i}\}_{i\in G_L}$ be a set of intervals which partitions $I_m$ and satisfies
$$N_j(J_{L,i})\approx L^{-(d-j\alpha)\beta},$$
and set
$$\hat{G}_L=\{i\in G_L:N_j(J_{L,i})\geq |J_{L,i}|^{1+\gamma}\}.$$
Hence for $i\in G_L\setminus \hat{G}_L$ we have $|J_{i,L}|\geq L^{-\frac{\beta(d-j\alpha)}{1+\gamma}}$,
which gives $\# (G_L\setminus \hat{G}_L)=O\left(L^{\frac{\beta(d-j\alpha)}{1+\gamma}}\right)$.
So we get
\begin{equation}\label{appEq4}
 \sum_{i\in G_L\setminus \hat{G}_L} N_j(J_{L,i})=O(L^{\frac{\beta(d-j\alpha)}{1+\gamma} -(d-j\alpha)\beta})\xrightarrow{L\rightarrow \infty}0.
\end{equation}
Following the steps from \eqref{appEq3} for a non-negative function $\varphi\in C_c(\mathbb{R})$ we have
\begin{align}\label{appEq8}
\mathcal{L}^\omega_{I_m, L}(\varphi)=\sum_{i\in\hat{G}_L} \frac{N_j(J_{L,i})}{N_j(I_m)} \mathcal{L}^\omega_{J_{L,i}, L}(\varphi)+O(L^{-\frac{(d-j\alpha)\beta\gamma}{1+\gamma}}),
\end{align}
hence we have to show 
\begin{equation*}
 \sup_{i\in\hat{G}_L}\left|\mathbb{E}^\omega\left[\mathcal{L}^\omega_{J_{L,i}, L}(\varphi)\right]-\exp\left(\int(e^{\varphi(x)}-1)dx\right)\right|\xrightarrow{L\rightarrow \infty}0.
\end{equation*}
\noindent{\bf Step3:}
For the choices $\theta,\beta,\gamma$ as defined earlier and $R> 2d$, there is a choice of $0<\mu<1$ such that Theorem \ref{infDivThm} are valid. 
Let $P_L$ to be the indexing set $\{p_i\}_i$ that is defined in Theorem \ref{infDivThm}, and let  denote 
$$S_L=\{(p_i,E^\omega_{i,n},E^\omega_{k_i,L}): p_i\in P_L, E^\omega_{i,n}\in\sigma(H^\omega_{\Lambda_{l_L}(p_i)}), E^\omega_{k_i,L}\in\sigma(\tilde{H}^\omega_L),
\&~\eqref{infDivThmEq2},\eqref{infDivThmEq3}~are~satisfied\}.$$
Define the point process
$$\hat{\Xi}^\omega_{t,i,L}(\cdot)=\sum_{p\in P_L}\sum_{E\in\sigma(H^\omega_{\Lambda_{l_L}(p)})\cap J_{L,i}}\delta_{N_j(J_{L,i})L^{d-j\alpha} (N_{j,J_{L,i}}(E)-t)}(\cdot),$$
and observe that for $z\in\mathbb{C}^{+}$ we have
\begin{align*}
&\int_0^1 \left|\Xi^\omega_{t,J_{L,i},L}(Im(\cdot-z)^{-1})-\hat{\Xi}^\omega_{t,i,L}(Im(\cdot-z)^{-1})\right|dt\\
&=\frac{1}{N_j(J_{L,i})L^{d-j\alpha}}\int^1_0\left|\sum_{\tilde{E}\in\sigma(\tilde{H}_L^\omega)\cap J_{L,i}}Im\frac{1}{N_j(\tilde{E})-t-(N_j(J_{L,i})L^{d-j\alpha})^{-1}z}\right.\\
&\qquad\qquad\left.-\sum_{p\in P_L}\sum_{E\in\sigma(H^\omega_{\Lambda_{l_L}(p)})\cap J_{L,i}} Im\frac{1}{N_j(E)-t-(N_j(J_{L,i})L^{d-j\alpha})^{-1}z} \right|dt\\
&\leq \frac{1}{N_j(J_{L,i})L^{d-j\alpha}}\int^1_0 \left| \sum_{(p_i,E^\omega_{i,n},E^\omega_{k_i,L})\in S_L} Im\frac{1}{N_j(E^\omega_{i,n})-t-(N_j(J_{L,i})L^{d-j\alpha})^{-1}z}\right.\\
&\qquad\qquad\qquad\qquad\qquad\qquad\left.- Im\frac{1}{N_j(E^\omega_{k_i,L})-t-(N_j(J_{L,i})L^{d-j\alpha})^{-1}z}\right|dt\\
&\qquad+\frac{2\pi}{N_j(J_{L,i})L^{d-j\alpha}} O\left( (L^{d-j\alpha}N_j(J_{L,i}))\left[L^{\eta_1+d\mu-(d-j\alpha)(1-\beta)}
+L^{\eta_2^\prime-(d-j\alpha)(1-\beta)}\right]\right)\\
&\leq O\left((Im z)^{-2}L^{(d-j\alpha)(1-\beta)+d-R}+L^{\eta_1+d\mu-(d-j\alpha)(1-\beta)}+L^{\eta_2^\prime-(d-j\alpha)(1-\beta)}\right)
\xrightarrow{L\rightarrow\infty}0.
\end{align*}
In above, for the eigenvalues which can be approximated well (i.e \eqref{infDivThmEq2} is valid), we are using intermediate value theorem along with the fact that $N_j$ is absolutely continuous. 
For the second part, we are using the fact that integration over $t$ is bounded by $\pi$, so all we need is \eqref{infDivThmEq4} to make estimations.

Since we have freedom to choose $p$ in \eqref{infDivThmEq1} (which comes at a cost of having to choose $L$ large for the conclusion of the Theorem \ref{infDivThm} to hold),
we will choose $p$ to be greater than one.
Hence using the denseness of the functions $Im(\cdot-z)^{-1}$ in $C_0(\mathbb{R})$, we have
\begin{equation}\label{appEq9}
 \sup_{i\in\hat{G}_L}\left|\mathcal{L}^\omega_{J_{L,i}, L}(\varphi)-\int_0^1 e^{-\hat{\Xi}^\omega_{t,i,L}(\varphi)}dt\right|\xrightarrow{L\rightarrow \infty}0\qquad a.e~\omega.
\end{equation}
for non-negative $\varphi\in C_c(\mathbb{R})$. Now observe that
\begin{align*}
&\mathbb{E}^\omega\left[\int_0^1 e^{-\hat{\Xi}^\omega_{t,i,L}(\varphi)}dt\right]\\
&=\int_0^1 \prod_{p\in P_L} \left(\mathbb{E}^\omega\left[\exp\left(-\sum_{E\in\sigma(H^\omega_{\Lambda_{l_L}(p)})\cap J_{L,i}}\varphi(N_j(J_{L,i})L^{d-j\alpha} (N_{j,J_{L,i}}(E)-t)) \right)\right]\right) dt.
\end{align*}
So let us focus on the product,
\begin{align*}
&\sum_{p\in P_L}\ln\left(1-\mathbb{E}^\omega\left[1-e^{-\sum_{E\in\sigma(H^\omega_{\Lambda_{l_L}(p)})\cap J_{L,i}}\varphi(N_j(J_{L,i})L^{d-j\alpha} (N_{j,J_{L,i}}(E)-t))}\right]\right)\\
&=\sum_{p\in P_L}\ln\Bigg(1-\mathbb{P}[\sigma(H^\omega_{\Lambda_{l_L}(p)})\cap J_{L,i}=1] \\
&\qquad\qquad \times \mathop{\mathbb{E}}_{|\sigma(H^\omega_{\Lambda_{l_L}(p)})\cap J_{L,i}|=1}  \left[1-e^{-\varphi(N_j(J_{L,i})L^{d-j\alpha} (N_{j,J_{L,i}}(E^\omega_{\Lambda_{l_L}(p),i})-t))} \right]\Bigg)\\
&\qquad+\sum_{p\in P_L}\ln\Bigg(1-\mathbb{P}[\sigma(H^\omega_{\Lambda_{l_L}(p)})\cap J_{L,i}\geq 2]\\
&\qquad\left.\times \mathop{\mathbb{E}}_{|\sigma(H^\omega_{\Lambda_{l_L}(p)})\cap J_{L,i}|\geq 2}\left[1-e^{-\sum_{E\in\sigma(H^\omega_{\Lambda_{l_L}(p)})\cap J_{L,i}} \varphi(N_j(J_{L,i})L^{d-j\alpha} (N_{j,J_{L,i}}(E)-t)) }\right]\right).
\end{align*}
Here $E^\omega_{\Lambda_{l_L}(p),i}$ denotes the unique eigenvalue of $H^\omega_{\Lambda_{l_L}(p)}$ in $J_{L,i}$.
First we concentrate on the second part of above expression. 
Using \eqref{modEstEq2} we have $\mathbb{P}[\sigma(H^\omega_{\Lambda_{l_L}(p)})\cap J_{L,i}\geq 2]$ vanishing to zero, 
and using the fact that for non-negative $\varphi$, the expression within the expectation is bounded above by one, we have
\begin{align*}
& \sum_{p\in P_L}\Bigg| \ln\Bigg(1-\mathbb{P}[\sigma(H^\omega_{\Lambda_{l_L}(p)})\cap J_{L,i}\geq 2]\\
&\qquad\left.\times\mathop{\mathbb{E}}_{|\sigma(H^\omega_{\Lambda_{l_L}(p)})\cap J_{L,i}|\geq 2}\left[1-e^{-\sum_{E\in\sigma(H^\omega_{\Lambda_{l_L}(p)})\cap J_{L,i}} \varphi(N_j(J_{L,i})L^{d-j\alpha} (N_{j,J_{L,i}}(E)-t))}\right]\right)\Bigg|\\
&\leq \sum_{p\in P_L}\mathbb{P}[\sigma(H^\omega_{\Lambda_{l_L}(p)})\cap J_{L,i}\geq 2]\leq O\left(L^{(d-j\alpha)\left(1-\frac{2\beta}{1+\gamma}+\kappa\right)+2d\mu} \right)\xrightarrow{L\rightarrow\infty}0,
\end{align*}
where last equation follows from \eqref{modEstEq2} and \eqref{infDivThmPfEq3}.

For the first part, since $N_{j,J_{L,i}}$ is distribution function of eigenvalues and we have conditioned over $\sigma(H^\omega_{\Lambda_{l_L}(p)})$ $\cap J_{L,i}$, 
the random variable $N_{j,J_{L,i}}(E^\omega_{\Lambda_{l_L}(p)})$ is uniformly distributed on $[0,1]$ hence the first expression becomes
\begin{align*}
&\sum_{p\in P_L}\ln\Bigg(1-\mathbb{P}[\sigma(H^\omega_{\Lambda_{l_L}(p)})\cap J_{L,i}=1] \\
&\qquad\times\mathbb{E}^\omega\left[1-e^{-\varphi(N_j(J_{L,i})L^{d-j\alpha} (N_{j,J_{L,i}}(E^\omega_{\Lambda_{l_L}(p),i})-t))}\right.\left.\bigg| E^\omega_{\Lambda_{l_L}(p)}\in \sigma(H^\omega_{\Lambda_{l_L}(p)})\cap J_{L,i}\right]\Bigg)\\
&=\sum_{p\in P_L}\ln\bigg(1-\frac{\mathbb{P}[\sigma(H^\omega_{\Lambda_{l_L}(p)})\cap J_{L,i}=1]}{N_j(J_{L,i})L^{d-j\alpha}} \int 1-e^{-\varphi(x)}dx\bigg)\\
&=-\left(\int 1-e^{-\varphi(x)}dx\right)\left(\sum_{p\in P_L}\frac{\mathbb{P}[\sigma(H^\omega_{\Lambda_{l_L}(p)})\cap J_{L,i}=1]}{N_j(J_{L,i})L^{d-j\alpha}}\right) +o(1).
\end{align*}
In above we are using $\ln(1-z)=-z+O(z^2)$ for small enough $z$ and the fact that $\mathbb{P}[\sigma(H^\omega_{\Lambda_{l_L}(p)})\cap J_{L,i}=1]\xrightarrow{L\rightarrow\infty}0$.
The second order term is small follows from the next expression and the fact that 
if we have a doubly indexed sequence $\{\{x_{L,n}\}_{1\leq n\leq L,L\in\mathbb{N}}$ of positive numbers such that $\sum_n x_{L,n}=1$ and $\max_n x_{L,n}\xrightarrow{L\rightarrow\infty} 0$, 
then $\sum_n x_{L,n}^2$ also converges to zero.
Theorem \ref{infDivThm} also implies
\begin{align*}
&\left|\frac{\mathop{\mathbb{E}}_{\mathcal{Z}_L}[\#\{E^\omega_n\in\sigma(H^\omega_L)\cap J_{L,i}\}]}{N_j(J_{L,i})L^{d-j\alpha}}
-\sum_{p\in P_L}\frac{\mathbb{P}[\sigma(H^\omega_{\Lambda_{l_L}(p)})\cap J_{L,i}=1]}{N_j(J_{L,i})L^{d-j\alpha}}\right|\\
&\qquad\qquad\leq O\left(L^{\eta_1+d\mu-(d-j\alpha)(1-\beta)}+L^{\eta^\prime_2-(d-j\alpha)(1-\beta)}\right),
\end{align*}
which combining with the previous steps gives
\begin{equation}\label{appEq10}
 \mathbb{E}^\omega\left[\int_0^1 e^{-\hat{\Xi}^\omega_{t,i,L}(\varphi)}dt\right]\xrightarrow{L\rightarrow\infty}\exp\left(\int (e^{-\varphi(x)}-1)dx\right).
\end{equation}
This completes the proof of the theorem by combining \eqref{appEq6},\eqref{appEq7},\eqref{appEq8},\eqref{appEq9} and \eqref{appEq10}.

\qed
\begin{remark}
The result is weaker than the result proved by Klopp\cite{Klopp}, 
but we have to allow it because for $j$ large we can have $d-j\alpha<1$, hence 
\begin{equation*}
 \sum_{L>M}\mathbb{E}^\omega\left[\left|\int_0^1 e^{-\hat{\Xi}^\omega_{t,i,L}(\varphi)}dt-\exp\left(\int (e^{-\varphi(x)}-1)dx\right)\right|\right]\xrightarrow{M\rightarrow\infty}0.
\end{equation*}
may not hold, and going to $L^p$-space also does not help because part of the estimate is done using probability only.

Since we can choose $\kappa$ as small as we want, we can make sure that $\beta$ is as close to $\frac{1}{2}$.
So if $d-j\alpha>2$ then above series is summable with right choices of $\beta$, and we can show
$$\int_0^1 e^{-\hat{\Xi}^\omega_{t,i,L}(\varphi)}dt\xrightarrow{L\rightarrow\infty}\exp\left(\int (e^{-\varphi(x)}-1)dx\right)\qquad a.e~\omega.$$ 
All other expectations that showed up in the proof can be done with almost everywhere convergence with a little modifications of the calculations done above.
\end{remark}
\subsubsection*{Proof of Theorem \ref{main}:}
\noindent We exploit the idea from Minami\cite{NM} to prove the main result. Divide $\Lambda_L$ into $N_L^d$ numbers of disjoint cubes
$C_p,$ $(p=1,2,\cdots,N_L^d$) with side length $\frac{2L+1}{N_L}$, i.e 
\begin{equation}
\label{disjointunion}
 \Lambda_L=\displaystyle \sqcup_p C_p~~~~with~~|C_p|=\bigg(\frac{2L+1}{N_L}\bigg)^d.
\end{equation}
Define 
$$\partial C_p=\{n\in C_p:\exists~ n'\in \mathbb{Z}^d\setminus C_p~~such~that~|n-n'|=1\}$$
$$int(C_p)=\{n\in C_p:dist(n,\partial C_p)>l_L\}.$$
where $l_L=\delta ~ln L,~\delta>0$ for large large enough $L$ and $N_L\approx(2L+1)^\epsilon,~0<\epsilon<1$. Define the point process $\eta^\omega_{L,p}$ associated to $H^\omega_{C_p}$ by 
\begin{equation}
 \label{tra}
\eta^\omega_{L,p,E}=\sum_{x\in\sigma (H^\omega_{C_p})}\delta_{L^{d-\alpha}(x-E)}.
\end{equation}
Note that the matrices \{$H^\omega_{C_p}\}_p$ are statistically independent.
the proof follows in two steps:
\\\\
{\bf Step 1:} In this step we show that  $\{\xi^\omega_L\}$ and $\{\sum_p\eta^\omega_{L,p,E}\}$ converges to same limit in the sense of distribution.
For this one has to verify:
\begin{equation}
 \label{equiv}
\lim_{L\to\infty}\mathbb{E}^\omega\bigg\{\bigg|e^{-\int fd\xi^\omega_{L,E}}-e^{-\sum_p\int f d\eta^\omega_{L,p,E}}\bigg|\bigg\}=0~~for~~f\in C_c(\mathbb{R}).
\end{equation}
The linear combination of the functions $\phi_z(x)=Im\frac{1}{x-z},~z\in\mathbb{C}^+$ are dense in $L^1(\mathbb{R})$ (for more details see \cite[Appendix: The Stone-Weierstrass Gavotte]{HRWB}). Finally using $|e^{-x}-e^{-y}|\leq |x-y|,~x,y\ge0$ on (\ref{equiv}) for $\phi_z$, one needs to show
\begin{equation}
\label{eqcn}
 \lim_{L\to\infty}\mathbb{E}^\omega\bigg\{\bigg|\int \phi_z d\xi^\omega_{L,E}-\sum_p\int\phi_z d\eta^\omega_{L,p,E}\bigg|\bigg\}=0.
\end{equation}
Since $\Lambda_L$ is the disjoint union of $C_p~(p=1,2,\cdots,N_L^d)$, we have (denote $z_L=E+L^{-(d-\alpha)}z$)
\begin{align}
\label{diff}
\int \phi_z d\xi^\omega_{L,E}-\sum_p\int\phi_z d\eta^\omega_{L,p,E} &= \frac{1}{L^{d-\alpha}}\bigg[
Tr\bigg(Im G^{\Lambda_L}(z_L)\bigg)-\sum_pTr\bigg( Im G^{C_p}(z_L)\bigg)\bigg]\\
&=\frac{1}{L^{d-\alpha}}\sum_p\sum_{n\in C_p}\bigg[ImG^{\Lambda_L}(z_L;n,n)-ImG^{C_p}(z_L;n,n)\bigg].\nonumber
\end{align}
For $z\in\mathbb{C}^+$ and $n\in int(C_p)$, the resolvent equation give us
$$G^{\Lambda_L}(z;n,n)-G^{C_p}(z;n,n)=\sum_{(m,k)\in\partial C_p}G^{\Lambda_L}(z;k,n)~G^{C_p}(z;n,m),$$
here $(m,k)\in\partial C_p$ means $m\in\partial C_p$, $k\in \mathbb{Z}^d\setminus C_p$ with $|m-k|=1$.
Using the above in (\ref{diff})
\begin{align}
 \label{diff1}
&\bigg|\int \phi_z d\xi^\omega_{L,E}-\sum_p\int\phi_z d\eta^\omega_{L,p,E}\bigg| \\
&\qquad \leq \frac{1}{L^{d-\alpha}}\sum_p\sum_{n\in C_p\setminus int(C_p)}
\big[Im G^{\Lambda_L}(z_L;n,n)+ Im G^{C_p}(z_L;n,n)\big]\\
&\qquad \qquad+\frac{1}{L^{d-\alpha}}\sum_p\sum_{n\in int(C_p)}\sum_{(m,k)\in\partial C_p} \big|G^{\Lambda_L}(z_L;k,n)\big| \big|G^{C_p}(z_L;n,m)\big|\nonumber\\
&=A_L+B_L\nonumber.
\end{align}
estimating $A_L$ and $B_L$ as follows
\begin{align}
\label{estA}
 \mathbb{E}^\omega(A_L)&\leq  \frac{2}{L^{d-\alpha}}\sum_p\sum_{n\in C_p\setminus int(C_p)} b_n^{-1},~~(see (\ref{pro1r}))\\
&= O\bigg(L^{-(d-\alpha)}\bigg(\frac{2L+1}{N_L}\bigg)^{d-1-\alpha}N_L^{d-\alpha} l_L\bigg),~~\big(b_n=1+|n|^\alpha\big)\nonumber\\
&=O\big(N_LL^{-1}l_L\big).\nonumber\\
&=O\big(L^{-(1-\epsilon)}ln L\big), ~~N_L=(2L+1)^\epsilon,~~0<\epsilon<1.\nonumber
\end{align}
and
\begin{align}
 \label{estB}
\mathbb{E}^\omega(B_L) &\leq \frac{1}{L^{d-\alpha}}\sum_p\sum_{n\in int(C_p)}\sum_{(m,k)\in\partial C_p}\mathbb{E}^\omega\bigg[ \big|G^{\Lambda_L}(z_L;k,n)\big|^s\big|G^{\Lambda_L}(z_L;k,n)\big|^{(1-s)}\nonumber\\
&\qquad\qquad\qquad\qquad\qquad\qquad\qquad\qquad\qquad \big|G^{C_p}(z_L;n,m)\big|\bigg]\qquad for~0<s<1,
\end{align}
Since $n\in int(C_p)$ and $(m,k)\in\partial C_p$ so we have $|n-k|>l_L$. Using exponential decay of eigenfunctions (\cite[Theorem 1.2 (i)]{GJMS}) we have factional localization estimate $\mathbb{E}^\omega\big(\big|G^\Lambda_L(z_L;k,n)\big|^s\big)\leq C e^{-r |n-k|}$, so using it in (\ref{estB}) gives
\begin{align*}
 \mathbb{E}^\omega(B_L)\leq \frac{C}{L^{d-\alpha}|Imz_L|^{2-s}}N_L^d\bigg(\frac{2L+1}{N_L}\bigg)^d\bigg(\frac{2L+1}{N_L}\bigg)^{d-1}l_Le^{-r l_L}.
\end{align*}
Using $N_L=(2L+1)^\epsilon,~~0<\epsilon<1$, $l_L=\delta ln L~~\delta>0$ and $Imz_L=L^{-(d-\alpha)}z$ in above to get
\begin{equation}
\label{esB}
 \mathbb{E}^\omega(B_L)=O\big(L^{-\tilde{\delta}}ln L\big),
\end{equation}
where $\tilde{\delta}= r\delta-[2d-1+(1-s)(d-\alpha)+\epsilon(1-d)]>0$  with
$$\delta>\frac{1}{r}[2d-1+(1-s)(d-\alpha)+\epsilon(1-d)].$$
Concluding (\ref{eqcn}) from (\ref{diff1}), (\ref{estA}) and (\ref{esB}).
\begin{remark}
\label{equivl}
Defining 
$$\xi^\omega_{L,l_L,E}(\cdot):=\sum_{n\in\Lambda_L\setminus\Lambda_{l_L}}\langle \delta_n,E_{_{H^\omega_{L,l_l}}}\big(E+L^{-(d-\alpha)}(\cdot)\big)\delta_n\rangle\qquad E>a_1,$$
one can follow Step 1 and show that
\begin{equation}
\label{equivl1}
\lim_{L\to\infty}\mathbb{E}^\omega\big(\xi^\omega_{L,l_L,E}(I)\big)=\lim_{L\to\infty}\mathbb{E}^\omega\big(\xi^\omega_{L,E}(I)\big),
\end{equation}
for any bounded interval $I$.
\end{remark}
\noindent {\bf Step 2:} 
In this step, weak convergence of $\displaystyle\sum_{p=1}^{N_L^d}\eta_{L,p,E}$ is shown. 
First we show that the triangular array $\big\{\eta_{L,p,E}:p=1,2,\cdots,N_L^d\big\}_L$ is uniformly asymptotically negligible. For this one need to show
\begin{equation}
 \label{una}
\lim_{L\to \infty}\sup_{p}\mathbb{P}\big(\eta^\omega_{L,p,E}(I)\ge1\big)=0
\end{equation}
for any bounded interval $I$ (see \cite[equation (11.2.2)]{DJ}).
To show $\displaystyle\sum_{p=1}^{N_L^d}\eta_{L,p,E}$ converges weakly to Poisson point process we use \cite[Theorem 11.2.V]{DJ}. For that one need to verify:
\begin{equation}
 \label{con1}
\sum_{p=1}^{N_L^d}\mathbb{P}\big(\eta^\omega_{L,p,E}(I)\ge2\big)=0,~~~~~~as~~L\to\infty
\end{equation}
and 
\begin{equation}
 \label{con2}
\sum_{p=1}^{N_L^d}\mathbb{P}\big(\eta^\omega_{L,p,E}(I)\ge1\big)=N_1'(E)|I|,~~~~as~~~L\to\infty.
\end{equation}
For any bounded interval $I$.\\
The condition (\ref{una}) is obtained using Wegner estimate (\ref{wegner})
\begin{align*}
 \mathbb{P}\big(\eta^\omega_{L,p,E}(I)\ge1\big) & \leq \mathbb{E}^\omega\big[\eta^\omega_{L,p,E}(I)\big]\\
                                                &= \mathbb{E}^\omega\big[TrE_{H^\omega_{C_p}}\big(E+L^{-(d-\alpha)}I\big)\big]\\
                                                &\leq\pi\frac{|I|}{L^{d-\alpha}}\sum_{n\in C_p}(1+|n|^\alpha)^{-1}\\
                                                &=O\bigg(L^{-(d-\alpha)}\bigg(\frac{2L+1}{N_L}\bigg)^{d-\alpha}\bigg)=O(N_L^{-(d-\alpha)})
\end{align*}
giving  (\ref{una}). Next, note that
\begin{equation}
 \label{formula}
\mathbb{P}\big(\eta^\omega_{L,p,E}(I)\ge1\big)=\mathbb{E}^\omega\big[\eta^\omega_{L,p,E}(I)\big]-\sum_{j\ge2}\mathbb{P}\big(\eta^\omega_{L,p,E}(I)\ge j\big).
\end{equation}
Using Minami estimate (\ref{Minami}) for second term of above equation,
\begin{align}
\label{m1}
 \sum_{j\ge2}\mathbb{P}\big(\eta^\omega_{L,p,E}(I)\ge j\big) &=\sum_{j\ge2} (j-1)\mathbb{P}\big(\eta^\omega_{L,p,E}(I)=j\big)\\
&\leq \sum_{j\ge2} j(j-1)\mathbb{P}\big(\eta^\omega_{L,p,E}(I)=j\big)\nonumber\\
&=\mathbb{E}^\omega\big[\eta^\omega_{L,p,E}(I)\big(\eta^\omega_{L,p,E}(I)-1\big)\big]\nonumber\\
&=\mathbb{E}^\omega\bigg[TrE_{H^\omega_{C_p}}\big(E+L^{-(d-\alpha)}I\big)
\bigg\{TrE_{H^\omega_{C_p}}\big(E+L^{-(d-\alpha)}I\big)-1\bigg\}\bigg]\nonumber\\
&=\bigg(\sum_{n\in C_p}(1+|n|^\alpha)^{-1}\frac{|I|}{L^{d-\alpha}}\bigg)^2\nonumber\qquad\qquad(\text{using}~(\ref{Minami}))\\
&=O\bigg(\bigg(\bigg(\frac{2L+1}{N_L}\bigg)^{d-\alpha}L^{-(d-\alpha)}\bigg)^2\bigg)=O\bigg(N_L^{-2(d-\alpha)}\bigg).\nonumber
\end{align}
Hence
\begin{equation}
 \label{m2}
\sum_p\sum_{j\ge2}\mathbb{P}\big(\eta^\omega_{L,p,E}(I)\ge j\big)=O\bigg(N_L^{-(d-\alpha)}\bigg).
\end{equation}
(\ref{con1}) follows from this. To get \eqref{con2}, Remark \ref{ana-cont} and Remark \ref{equivl} is used 
\begin{align}
 \label{intensity}
\lim_{L\to\infty}\sum_p\mathbb{E}^\omega\left[\eta^\omega_{L,p,E}(I)\right]&=\lim_{L\to\infty}\mathbb{E}^\omega\left[\xi^\omega_{L,E}(I)\right]\\
                              &=\lim_{L\to\infty}\mathbb{E}^\omega\left(\xi^\omega_{L,l_L,E}\right), \qquad(\text{using}~(\ref{equivl1}))\nonumber\\
                              &=\lim_{L\to\infty}L^{d-\alpha}\int_{E+L^{-(d-\alpha)}I}N^{'}_{L,l_L}(x)dx\nonumber\\
                              &=\lim_{L\to\infty}\int_{I}N^{'}_{L,l_L}\big(E+L^{-(d-\alpha)}y\big)dy=N_1'(E)|I|,\qquad(\text{using}~(\ref{uni-den})).\nonumber
\end{align}

Using (\ref{m2}) and (\ref{intensity}) in (\ref{formula}) we get (\ref{con2}) completing the proof.

\qed
\\
\noindent{\bf Acknowledgement:}  
We would like to thank the referee for his suggestions to improve the article. 
The authors are thankful to M. Krishna for going through the manuscript.


\end{document}